\newcommand{\aux}{\mathrm{aux}}
\newcommand{\ep}{\epsilon}
\newcommand{\m}{\mbox{d}}
\newcommand{\ov}{\overline}
\definecolor{blue1}{rgb}{0,0,0}
\definecolor{G1}{rgb}{0.3,0.7,0.0}
\newcommand*{\doi}[1]{\href{http://dx.doi.org/#1}{doi: #1}}
\theoremstyle{plain}
\newtheorem{theorem}{Theorem}[section]
\newtheorem{lemma}[theorem]{Lemma}
\newtheorem{prop}[theorem]{Proposition}
\newtheorem*{assumg*}{Assumption~(G)}
\newtheorem*{assumng*}{Assumption~(NG)}
\newtheorem*{assumng1*}{Assumption~(NG-bis)}
\theoremstyle{definition}
\newtheorem{rem}[theorem]{Remark}
\newtheorem{ass}[theorem]{Assumption}
\title{\vspace{-15mm}\fontsize{24pt}{10pt} \LARGE{\scshape{From weakly interacting particles to a regularised Dean--Kawasaki model}}} % 
\author{
\textsc{Federico Cornalba\footnote{e-mail: {\ttfamily F.Cornalba@bath.ac.uk}},\hspace{0.2 pc} Tony Shardlow\footnote{e-mail: {\ttfamily T.Shardlow@bath.ac.uk}},\hspace{0.2 pc} Johannes Zimmer\footnote{e-mail: {\ttfamily J.Zimmer@bath.ac.uk}}}\vspace{0.5 pc}\\ \footnotesize{Department of Mathematical Sciences, University of Bath, Bath, BA2 7AY, United Kingdom}
\vspace{-5mm}
}}
\date{}
\begin{document}

\maketitle 
\thispagestyle{empty}

\pagestyle{fancy} 

\renewenvironment{abstract}
 {\small
  \begin{center}
  \bfseries \abstractname\vspace{-0.0 pc}\vspace{0pt}
  \end{center}
  \list{}{
    \setlength{\leftmargin}{10mm}
    \setlength{\rightmargin}{\leftmargin}
  }
  \item\relax}
 {\endlist}

 \newenvironment{thanks}
 {  \list{}{
    \setlength{\leftmargin}{0mm}
    \setlength{\rightmargin}{\leftmargin}
  }
  \item\relax}
 {\endlist}

\vspace{-1 pc}
\begin{abstract} 
The evolution of finitely many particles obeying Langevin dynamics is described by Dean--Kawasaki equations, a class of stochastic equations featuring a non-Lipschitz multiplicative noise in divergence form. We derive a regularised Dean--Kawasaki model based on second order Langevin dynamics by analysing a system of particles interacting via a pairwise potential. 
Key tools of our analysis are the propagation of chaos and Simon's compactness criterion. The model we obtain is a small-noise stochastic perturbation of the undamped McKean--Vlasov equation.
We also provide a high-probability result for existence and uniqueness for our model. 

  {\bfseries Key words}: 
  Interacting particles, propagation of chaos, weakly self-consistent Vlasov-Fokker-Planck equation, Dean--Kawasaki model, mild solutions, second order Langevin dynamics. 
  
  {\bfseries AMS (MOS) Subject Classification}: 60H15 (35R60) 

\end{abstract}

\section{Introduction}

The Dean--Kawasaki model~\cite{Dean1996a,Kawasaki1998a} describes the evolution of a system of finitely many particles obeying Langevin dynamics. A key feature of the particle system is the stochastic independence of the forcing terms driving the particles. The particles themselves, on the other hand, might be independent~\cite{Lutsko2012a} or interact through a
potential~\cite{Dean1996a}: in this work, we focus on the latter case.

In its simplest form, the Dean--Kawasaki model reads
\begin{align}
  \label{i:310}
  \partial_t\rho=\nabla\cdot\left(\rho\,
  \nabla\frac{\delta F(\rho)}{\delta \rho}\right)+\nabla\cdot\left(\sigma\sqrt{\rho}\,\xi\right),
\end{align} 
with $\sigma\in\mathbb{R}$, where $\rho$ is the particle density, $F$ is an energy functional, and $\xi$ is a space-time white
noise. 
The model~\eqref{i:310} may be obtained from either a first-order Langevin equation~\cite{Dean1996a}, or from second-order
Langevin dynamics in an overdamped limit~\cite{Lutsko2012a}. 

Equations such as~\eqref{i:310} pose a challenge for existence theory, in particular due to the multiplicative structure of the noise in divergence form and to its square-root
coefficient function. 
The latter is related to the independence of the forcing terms driving the
particles~\cite{Dean1996a,Lutsko2012a}. Consequently, well-posedness for~\eqref{i:310} is an open question, with the exception of
the purely diffusive case~\cite{vonRenesse18}. More specifically, for the deterministic drift being $\frac{N}{2}\Delta$, where
$N>0$, equation~\eqref{i:310} admits a unique trivial (atomic) solution only if $N\in\mathbb{N}$, and has no solutions if
$N\notin \mathbb{N}$. This striking result indicates how subtle the analysis of equations of this kind is. 

In order to obtain
non-trivial solutions to~\eqref{i:310}, different approaches have been developed in recent years. One approach is to correct the
drift~\cite{Renesse2009a,Andres2010a,Konarovskyi2018a,Konarovskyi2017a}, another one is to regularise the
equation~\cite{Fehrman2017a,Mariani2010a}. For a regularised undamped equivalent of~\eqref{i:310}, corresponding to a regularised
stochastic wave equation in the density/momentum density pair $(\rho,j)$, a result of existence and uniqueness is found
in~\cite{Cornalba2018aTR}; that model, here referred to as the \emph{regularised Dean--Kawasaki} model, is derived from independent
particles.
The key regularisation chosen in~\cite{Cornalba2018aTR} is a representation of particles by Gaussians, rather than their limiting
Dirac measures. The main contributions of this work is to extend this idea to some important systems of interacting particles. 
Specifically, we derive and analyse a
regularised Dean--Kawasaki model set in the undamped regime, as in~\cite{Cornalba2018aTR}, but describing the evolution of a
system of finitely many weakly interacting particles governed by undamped McKean--Vlasov dynamics, see for
example~\cite{Duong2013a,Bolley10,Monmarche2017a}.  

Throughout the paper, we rely on some methodology found in~\cite{Cornalba2018aTR}.  However, the interaction of the particles also
requires various new approaches. Specifically, in contrast to~\cite{Cornalba2018aTR}, we employ propagation of chaos
techniques~\cite{Malrieu01} and Simon's compactness criterion~\cite{Simon1987a} to overcome the difficulties posed by
stochastically dependent particles. In addition, as the resulting model is superlinear (as specified below), we also need to localise the solutions using suitable stopping times. 
More details are provided
in Subsection~\ref{ss:2} below.

\subsection{Weakly interacting particles on a one-dimensional torus}\label{ss:1}

The system studied here consists of $N$ interacting particles on the one-dimensional flat torus of length one, denoted by $\mathbb{T}$. Each particle $i\in\{1,\dots,N\}$ is described in terms of position and velocity $(q_i,p_i)\in\mathbb{T}\times\mathbb{R}$. 
The system obeys the following undamped Langevin dynamics on a probability space $(\Omega,\mathcal{F},\mathbb{P})$, 
\begin{equation}
  \label{i:1}
  \left\{
    \begin{array}{l}
      \displaystyle \dot{q}_i= p_i,  \vspace{0.2 pc}\\
      \displaystyle \dot{p}_i= -\gamma p_i-\frac{1}{N}\sum_{j=1}^{N}{W'(q_i-q_j)}+\sigma\,\dot{\beta}_i,\qquad i=1,\dots,N,
    \end{array}
  \right.
\end{equation}
where $\{\beta_i\}_{i=1}^{N}$ are independent Brownian motions, the interaction potential $W$ is periodic and smooth, say $W\in C^2(\mathbb{T})$, the initial conditions $\{(q_{i,0},p_{i,0})\}_{i=1}^{N}$ are independent and identically distributed, and $\sigma$ and $\gamma$ are positive constants. The dissipative term $-\gamma p_i$ is a frictional drag, balancing the fluctuating Brownian term $\sigma\dot{\beta_i}$. The particles $\{(q_i,p_i)\}_{i=1}^{N}$ are exchangeable, but not necessarily independent. 
\begin{rem}
Throughout this work, diacritical dots ($\,\dot{}\,$) are used to indicate time differentiation of finite or infinite dimensional It\^o processes (e.g., see \eqref{i:1}).
\end{rem}

In order to study~\eqref{i:1}, we introduce an auxiliary Langevin system of particles $\{(\ov{q}_i,\ov{p}_i)\}_{i=1}^{N}$ obeying
\begin{equation}
  \label{i:2}
  \left\{
    \begin{array}{l}
      \displaystyle \dot{\ov{q}}_i= \ov{p}_i,  \vspace{0.2 pc}\\
          \displaystyle \dot{\ov{p}}_i= -\gamma \ov{p}_i-W'\ast\mu_t(\ov{q}_i)+\sigma\,\dot{\beta}_i,\qquad i=1,\dots,N,
    \end{array}
  \right.
\end{equation}
where $\ast$ denotes the convolution operator on $\mathbb{T}$, $\mu_t$ denotes the law
of $\ov{q}_i(t)$, and the Brownian motions and the initial conditions coincide $\mathbb{P}$-a.s. with their respective counterparts in~\eqref{i:1}. As a result of these assumptions, the particles $\{(\ov{q}_i,\ov{p}_i)\}_{i=1}^{N}$ are clearly independent. System~\eqref{i:2} is associated with the Vlasov--Fokker--Planck equation
\begin{align}\label{i:312}
\frac{\partial f_t}{\partial t}+p\frac{\partial f_t}{\partial q}-W'\ast \rho[f_t](q)\frac{\partial f_t}{\partial p}=\frac{\sigma^2}{2}\Delta_pf_t+\frac{\partial (\gamma pf_t)}{\partial p}
\end{align}
in the probability density function $f_{t}(q,p)\colon [0,T]\times\mathbb{T}\times\mathbb{R}\rightarrow [0,\infty)$, where $\rho[f_t](q)=\int_{\mathbb{R}}{f_t(q,p)\m p}$; see~\citep{Bolley10,Villani2009b}.

\subsection{Outline of the paper}\label{ss:2}

We derive and analyse a regularised Dean--Kawasaki model in the undamped regime, based on the interacting particle system~\eqref{i:1}. A portion of our analysis is based on~\cite{Cornalba2018aTR}, and the relevant methodological novelties are sketched and put into context below. 

Section \ref{s:2} contains some auxiliary results. Subsection \ref{ss:2.1} establishes a propagation of chaos result (Proposition \ref{ip:1}) linking~\eqref{i:1} and~\eqref{i:2}, using ideas from~\citep{McKean67,Malrieu01}. This sort of result, which is not required in~\cite{Cornalba2018aTR}, is here needed to compare the system of interest~\eqref{i:1} to the more tractable system of independent particles~\eqref{i:2}. Specific aspects of the latter system's regularity, and in particular of the regularity of solutions to~\eqref{i:312}, are studied in Proposition \ref{ip:2} in Subsection \ref{ss:2.2}; there, we explain the reason for choosing $\mathbb{T}$ (rather than $\mathbb{R}$ as in~\cite{Cornalba2018aTR}) as the spatial domain. Subsection \ref{ss:2.3} relies on Propositions \ref{ip:1} and \ref{ip:2} to establish Proposition \ref{ip:4}:
 for $\ep>0$, this result provides $\ep$-independent uniform estimates for certain Sobolev-space norms applied to the \emph{regularised} densities
 \begin{align}
\rho_{\ep}(x,t) & := \frac{1}{N}\sum_{i=1}^{N}{w_{\ep}(x-q_i(t))},\quad j_{\ep}(x,t):=\frac{1}{N}\sum_{i=1}^{N}{p_i(t)w_{\ep}(x-q_i(t))},\label{i:311a}\\
j_{2,\ep}(x,t) & :=\frac{1}{N}\sum_{i=1}^{N}{p^2_i(t)w'_{\ep}(x-q_i(t))}.\label{i:311b}
\end{align}
Above, $(x,t)\in\mathbb{T}\times[0,T]$, while $w_{\ep}$ is the periodic von Mises distribution~\citep{Forbes2011a} on $\mathbb{T}$ with location parameter $\mu:=0$ and concentration parameter $\kappa:=\ep^{-2}$, namely,
\begin{align}\label{i:18}
w_{\ep}(x)
:=Z_{\ep}^{-1}e^{-\frac{\sin^2(x/2)}{\ep^2/2}},\qquad Z_{\ep}:=\int_{\mathbb{T}}{e^{-\frac{\sin^2(x/2)}{\ep^2/2}}\m x}.
\end{align}
The quantities in~\eqref{i:311a} are the \emph{regularised} empirical density and momentum density for~\eqref{i:1}, and will be the building block of our final model; as for~\eqref{i:311b}, this is a relevant auxiliary quantity emerging from the analysis of~\eqref{i:311a}.

The kernel $w_\ep$ is introduced for smoothing and regularisation purposes. 
More precisely, we work with the quantities~\eqref{i:311a}--\eqref{i:311b} rather than their atomic counterparts defined by a replacement of $w_{\ep}$ with Dirac delta functions centred on the particles; this is a key aspect of our approach, as it allows us to use standard tools from stochastic analysis and work with smooth functions. We refer to~\cite[Section 1]{Cornalba2018aTR} for a similar discussion. 
The kernel $w_\ep$, which recovers a Dirac delta as $\ep\rightarrow 0$, is the toroidal equivalent of a Gaussian distribution with variance $\ep^2$. The basic inequality $|x/4|\leq\left|\sin(x/2)\right|\leq|x/2|$, valid for all $x\in[0,\pi]$, 
implies that the $\ep$-scalings of all the moments of $w_{\ep}$ are identical to those of a Gaussian of variance $\ep^2$. In particular, we have that $C_1\ep\leq Z_{\ep}\leq C_2\ep$, for some constants $C_2>C_1>0$. We can thus effectively use the kernel $w_{\ep}$ as if it is a Gaussian of variance $\ep^2$, thus reusing much of scaling considerations (of polynomial type in $\ep^{-1}$ and $N^{-1}$) found in~\cite{Cornalba2018aTR}, where $w_{\ep}$ is Gaussian.

\begin{rem}
Throughout the paper, the quantities in~\eqref{i:311a}--\eqref{i:311b} will always be understood under scalings of the type $N\ep^{\theta}=1$, for $\theta$ large enough. Such a scaling is convenient to deal with the simultaneous limits $\ep\rightarrow 0$ and $N\rightarrow\infty$. This is because most bounds that we will prove with respect to~\eqref{i:311a}--\eqref{i:311b} feature a polynomial contribution in $\ep^{-1}$ and $N^{-1}$, as mentioned above.  
\end{rem}
Section \ref{s:3} is concerned with the evolution of the particle system~\eqref{i:1}. Subsection \ref{ss:3.2} contains Proposition \ref{ip:3}, which provides relative compactness in law for the families $\{\rho_{\ep}\}_{\ep},\{j_{\ep}\}_{\ep}$, and $\{j_{2,\ep}\}_{\ep}$ in the limit $\ep\rightarrow 0$. In this result, the crucial feature of time regularity of the processes is settled not by the Kolmogorov criterion~\cite[Corollary 14.9]{Kallenberg2002a} (as for the corresponding result in~\cite{Cornalba2018aTR}), but by Simon's compactness criterion~\citep[Theorem 5]{Simon1987a} applied in the context of the Prokhorov Theorem~\cite{Kallenberg2002a}.
The need for the latter method arises since the estimates for the time regularity obtained here are less sharp than those in~\cite{Cornalba2018aTR}, due to 
the use of the propagation of chaos (Proposition \ref{ip:1}). 

We then focus on the evolution equations for~\eqref{i:311a}, which are the building blocks of our regularised Dean-Kawasaki model. As the evolution equations for~\eqref{i:311a} are not closable in~\eqref{i:311a}, we rely on three relevant approximations. The first one, explained in Subsection \ref{ss:3.3}, provides the distinctive particle interaction term $\left\{W'\ast \rho_\ep\right\}\rho_{\ep}$. 
The second one, detailed in Subsection \ref{ss:3.4}, gives the relevant Dean--Kawasaki type noise (depending on $\rho_{\ep}$ and on a regular infinite-dimensional noise). The key differences with respect to the analogous argument performed in~\cite{Cornalba2018aTR} (these being primarily due to the use of the propagation of chaos, the use of the von Mises kernels, and the lack of control over inverse powers of $\rho_{\ep}$ in the case of dependent particles) are explained there.   The third and final approximation, which we justify in a low-temperature regime, allows us to replace $j_{2,\ep}$ (defined in~\eqref{i:311b}) with a multiple of $\partial\rho_{\ep}/\partial x$. 

In Section \ref{s:4} we take advantage of the approximations discussed above and derive our \emph{regularised Dean-Kawasaki} model for weakly interacting particles in undamped regime
\begin{subequations}
\label{i:3000}
\begin{empheq}[left={}\empheqlbrace]{align}
  \,\,\displaystyle\frac{\partial \tilde{\rho}_{\ep}}{\partial t}(x,t) & = -\frac{\partial \tilde{j}_{\ep}}{\partial x}(x,t),\label{i:3000a}\\
  \,\,\displaystyle\frac{\partial \tilde{j}_{\ep}}{\partial t}(x,t) & = 
  -\gamma \tilde{j}_{\ep}(x,t)-\left(\frac{\sigma^2}{2\gamma}\right)\frac{\partial \tilde{\rho}_{\ep}}{\partial x}(x,t)-\{W'\ast\tilde{\rho}_{\ep}(\cdot,t)\}(x)\tilde{\rho}_{\ep}(x,t)
  +\frac{\sigma}{\sqrt{N}}\sqrt{\tilde{\rho}_{\ep}(x,t)}\,\tilde{\xi}_{\ep}, \label{i:3000b}\\
  \,\, \displaystyle\tilde{\rho}_{\ep}(x,0) & =\rho_0(x),\quad \tilde{j}_{\ep}(x,0)=j_0(x)\nonumber,
  \end{empheq}
\end{subequations}
for $ (x,t)\in \mathbb{T}\times[0,T]$, where $(\rho_0,j_0$) is a suitable initial datum, where $\tilde{\xi}_{\ep}$ is a regular $Q$-Wiener process (e.g., in the sense of~\cite{Prevot2007a}), and where the aforementioned approximations are visible in the last three terms of the right-hand side of~\eqref{i:3000b}. We use $(\tilde{\rho}_{\ep},\tilde{j}_{\ep})$ to refer to the solution of the approximate model~\eqref{i:3000}, and $({\rho}_{\ep},{j}_{\ep})$ to refer to the original densities in~\eqref{i:311a}.

We provide a few preliminary results concerning the existence of local mild solutions to~\eqref{i:3000} and also to its noise-free version. We then prove the main existence and uniqueness result of the paper, Theorem \ref{ithm:10}. More specifically, we perform a small-noise regime analysis, in a similar way to the one carried out in~\cite{Cornalba2018aTR}, to prove a high-probability existence and uniqueness result of mild solutions to~\eqref{i:3000}. On top of the arguments in~\cite{Cornalba2018aTR}, additional localisation procedures via stopping times and the conservation of mass for the system are needed to treat the locally bounded (superlinear) interaction term $\{W'\ast \tilde{\rho}_\ep\}\tilde{\rho}_{\ep}$.

\section{Preliminary results}\label{s:2}

We prove a few results which will be used in Section \ref{s:3} for the derivation of the undamped regularised Dean--Kawasaki model for weakly interacting particles.

\subsection{Propagation of chaos}\label{ss:2.1}

We first quantify how much the particles in~\eqref{i:1} follow their counterparts in~\eqref{i:2}.

\begin{prop}[Propagation of chaos]\label{ip:1}
Let $N\in\mathbb{N}$, let $\alpha\geq 2$ be an even natural number, let $T>0$, and let $W\in C^2(\mathbb{T})$. There exists a constant $C=C(W,T,\alpha)$ such that
\begin{align}
\sup_{t\in[0,T]}{\mean{|q_1(t)-\ov{q}_1(t)|^{\alpha}+|p_1(t)-\ov{p}_1(t)|^{\alpha}}^{\frac{1}{\alpha}}}\leq \frac{C(W,T,\alpha)}{\sqrt{N}},
\end{align}
where the particle notation is inherited from~\eqref{i:1} and~\eqref{i:2}.
\end{prop}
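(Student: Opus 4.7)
The plan is to use a synchronous coupling: since the Brownian motions and initial data in~\eqref{i:1} and~\eqref{i:2} agree $\mathbb{P}$-almost surely, the noise cancels pointwise in $\omega$ upon subtracting the two systems. Writing $Q_i := q_i - \ov{q}_i$ and $P_i := p_i - \ov{p}_i$ (with positions lifted from $\mathbb{T}$ to $\mathbb{R}$, which is harmless since $W'$ is periodic), one obtains the pathwise random ODE
\begin{equation*}
\dot{Q}_i = P_i, \qquad \dot{P}_i = -\gamma P_i + D_i, \qquad Q_i(0) = P_i(0) = 0,
\end{equation*}
with forcing $D_i := W'\ast\mu_t(\ov{q}_i) - \frac{1}{N}\sum_{j=1}^{N} W'(q_i - q_j)$. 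Since $\alpha \geq 2$ is an even integer, $|x|^\alpha$ is smooth, and the chain rule combined with Young's inequality (e.g.\ $\alpha|P_i|^{\alpha-1}|D_i| \leq (\alpha-1)|P_i|^\alpha + |D_i|^\alpha$, and similarly for the cross term coming from $\tfrac{d}{dt}|Q_i|^\alpha$) yields
\begin{equation*}
\frac{d}{dt}\bigl(|Q_i|^\alpha + |P_i|^\alpha\bigr) \leq C(\alpha,\gamma)\bigl(|Q_i|^\alpha + |P_i|^\alpha\bigr) + |D_i|^\alpha.
\end{equation*}
Taking expectations and exploiting the exchangeability of the coupled particles, the whole problem reduces to establishing $\mathbb{E}|D_1(t)|^\alpha \leq C\,\mathbb{E}(|Q_1|^\alpha + |P_1|^\alpha) + C\,N^{-\alpha/2}$; Gronwall's inequality applied to $Y(t) := \mathbb{E}[|Q_1|^\alpha + |P_1|^\alpha]$ with $Y(0)=0$ then delivers $Y(t) \leq C(T,W,\alpha,\gamma)\,N^{-\alpha/2}$ uniformly on $[0,T]$, and the statement follows on taking $\alpha$-th roots.

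The heart of the argument is the estimate on $\mathbb{E}|D_1|^\alpha$, for which I would decompose $D_1 = D_1^{(1)} + D_1^{(2)}$ with
\begin{equation*}
D_1^{(1)} := \tfrac{1}{N}\sum_{j=1}^{N}\bigl[W'(\ov{q}_1 - \ov{q}_j) - W'(q_1 - q_j)\bigr], \qquad D_1^{(2)} := W'\ast\mu_t(\ov{q}_1) - \tfrac{1}{N}\sum_{j=1}^{N} W'(\ov{q}_1 - \ov{q}_j).
\end{equation*}
The Lipschitz bound $|W'(a)-W'(b)| \leq \|W''\|_\infty |a-b|$ gives $|D_1^{(1)}| \leq \|W''\|_\infty\bigl(|Q_1| + N^{-1}\sum_j |Q_j|\bigr)$, and then Jensen's inequality combined with exchangeability produce $\mathbb{E}|D_1^{(1)}|^\alpha \leq C\,\mathbb{E}|Q_1|^\alpha$, which is absorbed into $Y(t)$. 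For $D_1^{(2)}$, the crucial observation is that the $\ov{q}_j$ with $j\neq 1$ are i.i.d.\ of law $\mu_t$ and independent of $\ov{q}_1$, so that $W'\ast\mu_t(\ov{q}_1) = \mathbb{E}\bigl[W'(\ov{q}_1 - \ov{q}_j) \,\big|\, \ov{q}_1\bigr]$ for each $j\neq 1$. Hence, up to a diagonal $j=1$ contribution of size $O(1/N)$ coming from $W'(0)$, $D_1^{(2)}$ is a normalised sum of $N-1$ conditionally i.i.d., centred, bounded random variables, and a Marcinkiewicz--Zygmund (equivalently Rosenthal-type) moment inequality---or a direct combinatorial computation, clean thanks to the even-$\alpha$ hypothesis---supplies $\mathbb{E}|D_1^{(2)}|^\alpha \leq C(W,\alpha)\,N^{-\alpha/2}$.

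The main obstacle I anticipate is precisely this last $N^{-\alpha/2}$ rate: the naive bound $|D_1^{(2)}| \leq 2\|W'\|_\infty$ does not decay in $N$ at all, and the conditional i.i.d.\ structure must be genuinely exploited to extract the $\sqrt{N}$-rate from the normalised empirical average. Once this moment bound is in hand, the synchronous coupling, the chain-rule inequality, and Gronwall's lemma combine routinely to deliver the desired $C(W,T,\alpha)/\sqrt{N}$ bound.
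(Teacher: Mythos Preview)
Your proposal is correct and follows essentially the same route as the paper: synchronous coupling so the noise cancels, the same two-term decomposition of the force error into a Lipschitz piece and a fluctuation piece, Young's inequality plus exchangeability for the first, an $N^{-\alpha/2}$ moment bound for the second, and Gronwall to close. The only cosmetic differences are that the paper applies H\"older to keep the factor $(p_i-\ov{p}_i)^{\alpha-1}$ coupled to the fluctuation term before a final Young step (whereas you apply Young pathwise first to isolate $|D_i|^\alpha$), and the paper carries out the Rosenthal-type bound on $D_1^{(2)}$ by an explicit combinatorial argument---splitting the $\alpha$-fold product into multi-indices with or without a singleton entry---rather than citing Marcinkiewicz--Zygmund; both yield the same $C(W,\alpha)N^{-\alpha/2}$.
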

\begin{proof}
We adapt the proof of~\citep[Theorem 3.3]{Malrieu01}. Let $\beta_N(t):=\mean{|q_1(t)-\ov{q}_1(t)|^{\alpha}+|p_1(t)-\ov{p}_1(t)|^{\alpha}}$. We apply the It\^o formula for the function $f(z)=|z|^\alpha$ applied to the processes $q_i(t)-\ov{q}_i(t)$ and $p_i(t)-\ov{p}_i(t)$, for each $i\in\{1,\dots,N\}$, and sum the results. We notice that the stochastic noise for $p_i(t)-\ov{p}_i(t)$, $i\in\{1,\dots,N\}$, vanishes by assumption. We obtain
\begin{subequations}
\label{i:3}
\begin{empheq}{align}
  \sum_{i=1}^{N}{|q_i(t)-\ov{q}_i(t)|^{\alpha}} & = \int_{0}^{t}{\sum_{i=1}^{N}{\alpha (q_i(r)-\ov{q}_i(r))^{\alpha-1}(p_i(r)-\ov{p}_i(r))\m r}}=:T_1,\label{i:3a}\\
  \sum_{i=1}^{N}{|p_i(t)-\ov{p}_i(t)|^{\alpha}} &= -\frac{\alpha}{N}\int_{0}^{t}{\sum_{i,j=1}^{N}{(p_i(r)-\ov{p}_i(r))^{\alpha-1}}}\left(W'(q_i(r)-q_j(r))-W'\ast\mu_r(\ov{q}_i(r))\right)\m r\nonumber\\
& \quad +\int_{0}^{t}{\sum_{i=1}^{N}{\alpha (p_i(r)-\ov{p}_i(r))^{\alpha-1}(-\gamma\left[p_i(r)-\ov{p}_i(r)\right])\m r}}=:T_2+T_3.\label{i:3b}  
\end{empheq}
\end{subequations}
We bound $T_1$ using the Young inequality with exponents $\alpha$ and $\alpha/(\alpha-1)$. We thus obtain for $T_1+T_3$
\begin{align}\label{i:8}
T_1+T_3\leq C(\alpha,\gamma)\int_{0}^{t}{\sum_{i=1}^{N}{\left(|q_i(r)-\ov{q}_i(r)|^{\alpha}+|p_i(r)-\ov{p}_i(r)|^{\alpha}\right)}\m r}.
\end{align}
As for $T_2$, we rewrite it as $T_2=-\frac{\alpha}{N}\int_{0}^{t}{\sum_{i,j=1}^{N}{\left\{c^{(1)}_{ij}(r)+c^{(2)}_{ij}(r)\right\}\m r}}$, where 
\begin{align*}
c^{(1)}_{ij}(r) & := \left[W'(q_i(r)-q_j(r))-W'(\ov{q}_i(r)-\ov{q}_j(r))\right]\left(p_i(r)-\ov{p}_i(r)\right)^{\alpha-1},\\
c^{(2)}_{ij}(r) & := \left[W'(\ov{q}_i(r)-\ov{q}_j(r))-W'\ast\mu_{r}(\ov{q}_i(r))\right]\left(p_i(r)-\ov{p}_i(r)\right)^{\alpha-1}.
\end{align*}
We use the boundedness of $W''$, a Taylor expansion of $W'$, and the Young inequality with exponents $\alpha$ and $\alpha/(\alpha-1)$ to find
\begin{align}\label{i:7}
& \left|-\frac{\alpha}{N}\int_{0}^{t}{\sum_{i,j=1}^{N}{c^{(1)}_{ij}(r)\m r}}\right| \nonumber\\
& \quad \leq \frac{\alpha}{N}\int_{0}^{t}{\sum_{i,j=1}^{N}{\left|W'(q_i(r)-q_j(r))-W'(\ov{q}_i(r)-\ov{q}_j(r))\right|\left|p_i(r)-\ov{p}_i(r)\right|^{\alpha-1}}\m r}\nonumber\\
& \quad \leq \frac{C(W,\alpha)}{N}\int_{0}^{t}{\sum_{i,j=1}^{N}{\left\{\left|q_i(r)-\ov{q}_i(r)\right|+\left|q_j(r)-\ov{q}_j(r)\right|\right\}\left|p_i(r)-\ov{p}_i(r)\right|^{\alpha-1}}\m r}\nonumber\\
& \quad\leq \frac{C(W,\alpha)}{N}\int_{0}^{t}{\sum_{i,j=1}^{N}{\left\{\left|q_i(r)-\ov{q}_i(r)\right|^{\alpha}+\left|q_j(r)-\ov{q}_j(r)\right|^{\alpha}+\left|p_i(r)-\ov{p}_i(r)\right|^{\alpha}\right\}}\m r}\nonumber\\
&\quad = C(W,\alpha)\int_{0}^{t}{\sum_{i=1}^{N}{\left\{\left|q_i(r)-\ov{q}_i(r)\right|^{\alpha}+\left|p_i(r)-\ov{p}_i(r)\right|^{\alpha}\right\}}\m r}.
\end{align} 
Fix $r\in[0,t]$ and $i\in\{1,\dots,N\}$. We employ the H\"older inequality with exponents $\alpha$ and $\alpha/(\alpha-1)$ to obtain
\begin{align}\label{i:9}
\mean{\sum_{j=1}^{N}c^{(2)}_{ij}(r)} & =\mean{\sum_{j=1}^{N}{\left[W'(\ov{q}_i(r)-\ov{q}_j(r))-W'\ast\mu_{r}(\ov{q}_i(r))\right]\left(p_i(r)-\ov{p}_i(r)\right)^{\alpha-1}}}\nonumber\\
& \leq \mean{\left|p_i(r)-\ov{p}_i(r)\right|^{\alpha}}^{(\alpha-1)/\alpha}\theta_i^{1/\alpha}(r),
\end{align}
where 
\begin{align*}
\theta_i(r):=\mean{\left|\sum_{j=1}^{N}{\xi_{\ov{q}_i(r),\ov{q}_j(r)}}\right|^{\alpha}}=\mean{\left(\sum_{j=1}^{N}{\xi_{\ov{q}_i(r),\ov{q}_j(r)}}\right)^{\alpha}},
\end{align*}
with $\xi_{\ov{q}_i(r),\ov{q}_j(r)}:=W'(\ov{q}_i(r)-\ov{q}_j(r))-W'\ast\mu_{r}(\ov{q}_i(r))$, 
and where we have also used the fact that $\alpha$ is an even natural number. We define
\begin{align*}
\mathcal{T}_{1,\alpha} & :=\left\{\mathbf{j}=(j_1,\dots,j_{\alpha})\in\{1,\dots,N\}^\alpha:\exists j_k\neq i\mbox{ such that }j_k\mbox{ appears exactly once in }\mathbf{j}\right\},\\
\mathcal{T}_{2,\alpha} & :=\left\{\mathbf{j}=(j_1,\dots,j_{\alpha})\in\{1,\dots,N\}^\alpha:\mathbf{j}\notin\mathcal{T}_{1,\alpha}\right\}.
\end{align*} 
We have $\#\mathcal{T}_{2,\alpha}\leq C(\alpha)N^{\alpha/2}$, where $\#$ denotes set cardinality. To see this, consider a generic $\mathbf{j}\in\mathcal{T}_{2,\alpha}$. There are at most $\alpha/2$ values attained in $\mathbf{j}$: arguing by contradiction, if this is not the case, then $i$ is attained exactly once (due to the definition of $\mathcal{T}_{2,\alpha}$). However, this means that the remaining $\alpha-1$ occurrences of $\mathbf{j}$ are distributed among at least $\alpha/2$ values, granting the existence of $j_k\neq i$ appearing exactly once in $\mathbf{j}$, and thus contradicting the definition of $\mathcal{T}_{2,\alpha}$.
We therefore have no more than $C(\alpha)N^{\alpha/2}$ possible configurations in $\mathcal{T}_{2,\alpha}$, where $C(\alpha)$ is a suitable constant.
We expand 
the definition of $\theta_i(r)$ as 
\begin{align*}
\theta_i(r)=\sum_{\mathbf{j}\in\mathcal{T}_{1,\alpha}}{\mean{\prod_{k=1}^{\alpha}{\xi_{\ov{q}_i(r),\ov{q}_{j_k}(r)}}}}+\sum_{\mathbf{j}\in\mathcal{T}_{2,\alpha}}{\mean{\prod_{k=1}^{\alpha}{\xi_{\ov{q}_i(r),\ov{q}_{j_k}(r)}}}}.
\end{align*}
For any $\mathbf{j}\in\mathcal{S}_{1,\alpha}$, it holds that $\mean{\prod_{k=1}^{\alpha}{\xi_{\ov{q}_i(r),\ov{q}_{j_k}(r)}}}=0$. To see this, let $z\in\mathbb{T}$, and let $j\neq i$ be an index appearing just once in $\mathbf{j}$.
Then
\begin{align}
& \mean{\left.\prod_{k=1}^{\alpha}{\xi_{\ov{q}_i(r),\ov{q}_{j_k}(r)}}\right|\ov{q}_i(r)=z} 
 = \prod_{j_k=i}{\xi_{z,z}}\cdot\mean{\left.\left(\prod_{j_k\neq i,j_k\neq j}{\xi_{z,\ov{q}_{j_k}(r)}}\right)\xi_{z,\ov{q}_{j}(r)}\right|\ov{q}_i(r)=z}\nonumber\\
& \quad = \prod_{j_k=i}{\xi_{z,z}}\cdot\mean{\left(\prod_{j_k\neq i,j_k\neq j}{\xi_{z,\ov{q}_{j_k}(r)}}\right)\xi_{z,\ov{q}_{j}(r)}}\label{eq:1000}\\
& \quad = \prod_{j_k=i}{\xi_{z,z}}\cdot\mean{\prod_{j_k\neq i,j_k\neq j}{\xi_{z,\ov{q}_{j_k}(r)}}}\mean{\xi_{z,\ov{q}_{j}(r)}}\label{eq:1001}\\
& \quad = \prod_{j_k=i}{\xi_{z,z}}\cdot\mean{\prod_{j_k\neq i,j_k\neq j}{\xi_{z,\ov{q}_{j_k}(r)}}}\mean{\left(W'(z-\ov{q}_{j}(r))-W'\ast\mu_{r}(z)\right)} = 0,\label{eq:1002}
\end{align}
where independence of particles is used in~\eqref{eq:1000} and~\eqref{eq:1001}, and $\mean{\left(W'(z-\ov{q}_{j}(r))-W'\ast\mu_{r}(z)\right)}=0$ settles~\eqref{eq:1002}. The exchangeability of particles, the H\"older inequality, the boundedness of $W'$, and the bound $\#\mathcal{T}_{2,\alpha}\leq C(\alpha)N^{\alpha/2}$ then give
\begin{align}\label{i:12}
\theta_i(r) & = \sum_{\mathbf{j}\in\mathcal{S}_{2,\alpha}}{\mean{\prod_{k=1}^{\alpha}{\xi_{\ov{q}_i(r),\ov{q}_{j_k}(r)}}}} \nonumber\\
& \leq C(\alpha)N^{\frac{\alpha}{2}}\mean{\left|W'(\ov{q}_1(r)-\ov{q}_2(r))\right|^{\alpha}+\left|W'\ast\mu_r(\ov{q}_1(r))\right|^{\alpha}}\leq C(W,\alpha)N^{\frac{\alpha}{2}}.
\end{align}
We sum~\eqref{i:3a} and~\eqref{i:3b}, combine~\eqref{i:8},~\eqref{i:7},~\eqref{i:9}, and~\eqref{i:12}, and use the exchangeability of the particles to obtain
\begin{align}\label{i:10}
\beta_N(t)\leq \int_{0}^{t}{C(\alpha,\gamma)\beta_{N}(r)\m r}+\int_{0}^{t}{C(W,\alpha)N^{-1/2}(\beta_N(r))^{(\alpha-1)/\alpha}\m r}.
\end{align}
Applying the Young inequality in the second integral of~\eqref{i:10} and then Gronwall's inequality completes the proof.
\end{proof}
We point out a couple of differences between Proposition \ref{ip:1} and~\citep[Theorem 3.3]{Malrieu01}. Firstly, we do not require convexity for the interaction potential $W$, as we are only interested in an estimate up to a given finite time; there is thus no need for a dissipative term in~\eqref{i:10}. 
Secondly, since the derivative $W'$ is bounded, we can choose $\alpha$ arbitrarily large without violating the validity of~\eqref{i:12}. In the proof of Proposition \ref{ip:4} below, we will pick $\alpha>2$.

\subsection{Fokker--Planck regularity estimates}\label{ss:2.2}

We now establish useful regularity properties of the particle system~\eqref{i:2}. We use $C^n$ to denote $n$ times continuously differentiable functions on $\mathbb{T}$, for $n\in\mathbb{N}\cup\{0\}$. We first specify our assumptions on~\eqref{i:2}.

\begin{ass}\label{i:7000}
  We assume that the initial datum $(\ov{q}(0),\ov{p}(0))$ of~\eqref{i:2} coincides with $(\ov{q}_{\aux}(t_0),\ov{p}_{\aux}(t_0))$
  for some $t_0>0$, where $(\ov{q}_{\aux},\ov{p}_{\aux})$ is an auxiliary process satisfying $\eqref{i:2}$ and starting from an
  initial datum distributed according to a probability density $f_0$ satisfying
  \begin{align*}
    \int_{\mathbb{T}}{\int_{\mathbb{R}}{f_0(q,p)(1+p^2)^k\emph{\m} p}\emph{\m} q}<\infty.
  \end{align*} 
\end{ass}

Our choice to only consider a process ``restarted'' at some time $t_0>0$ is motivated by the need of the uniform-in-time Sobolev estimates found in~\citep[(17.2)]{Villani2009b}, which we will use in the following result.
\begin{prop}\label{ip:2}
For $n,n_1\in\mathbb{N}\cup\{0\}$ and $c\geq 2$, let $w$ be a $C^n$-probability density function and $g\in C^n$. Let the initial datum of~\eqref{i:2} be as specified in Assumption \ref{i:7000}.
Then
\begin{align*}
\int_{\mathbb{T}}{\left|\mean{{\color{blue1}g(\ov{q}(t))}\ov{p}^{n_1}(t)\frac{\partial^n}{\partial x^n}w(x-\ov{q}(t))}\right|^c\emph{\m} x}\leq C(g,t_0,f_0,n),\qquad\mbox{for all }t\geq 0,
\end{align*}
where $C(g,t_0,f_0,n)$ does not depend on $w$.
\end{prop}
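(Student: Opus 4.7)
The plan is to rewrite the expectation as an integral against the joint density, transfer the spatial derivatives onto the density by integration by parts, recognise what remains as a convolution with $w$, and then use Young's convolution inequality together with the Fokker--Planck regularity of~\eqref{i:312} (this is where Assumption~\ref{i:7000} and the restart time $t_0>0$ enter).

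First, I would denote by $f_t(q,p)$ the joint probability density of $(\ov{q}(t),\ov{p}(t))$; by Assumption~\ref{i:7000} and the PDE~\eqref{i:312}, $f_t$ inherits from $f_0$ the finite $p$-moments of any order and, crucially, enjoys uniform-in-$t$ Sobolev estimates of the type~\cite[(17.2)]{Villani2009b} for all $t\geq 0$. Writing the expectation against $f_t$,
\begin{align*}
\mean{g(\ov{q}(t))\ov{p}^{n_1}(t)\tfrac{\partial^n}{\partial x^n}w(x-\ov{q}(t))}
=\int_{\mathbb{T}}\int_{\mathbb{R}} g(q)\,p^{n_1}\,\tfrac{\partial^n}{\partial x^n}w(x-q)\,f_t(q,p)\,\m p\,\m q.
\end{align*}

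Second, I would use the identity $\partial_x^n w(x-q)=(-1)^n\,\partial_q^n w(x-q)$ and integrate by parts $n$ times in $q$; there are no boundary terms because $\mathbb{T}$ is closed. Since $p^{n_1}$ does not depend on $q$, this yields
\begin{align*}
\mean{g(\ov{q}(t))\ov{p}^{n_1}(t)\tfrac{\partial^n}{\partial x^n}w(x-\ov{q}(t))}
=\int_{\mathbb{T}} w(x-q)\,h_t(q)\,\m q=(w\ast h_t)(x),
\end{align*}
where
\begin{align*}
h_t(q):=\tfrac{\partial^n}{\partial q^n}\Big[g(q)\,M_{n_1}(q,t)\Big],\qquad
M_{n_1}(q,t):=\int_{\mathbb{R}} p^{n_1}\,f_t(q,p)\,\m p.
\end{align*}

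Third, applying Young's inequality for periodic convolutions and using $\|w\|_{L^1(\mathbb{T})}=1$,
\begin{align*}
\left\|\mean{g(\ov{q}(t))\ov{p}^{n_1}(t)\tfrac{\partial^n}{\partial x^n}w(x-\ov{q}(t))}\right\|_{L^c(\mathbb{T})}
\leq \|w\|_{L^1(\mathbb{T})}\|h_t\|_{L^c(\mathbb{T})}=\|h_t\|_{L^c(\mathbb{T})},
\end{align*}
which removes all dependence on $w$ from the right-hand side. Expanding the $n$-th derivative by Leibniz and using the compactness of $\mathbb{T}$ (so that $\|\cdot\|_{L^c(\mathbb{T})}\lesssim\|\cdot\|_{L^\infty(\mathbb{T})}$), the task reduces to bounding $\|\partial_q^k[g\,\partial_q^{n-k}M_{n_1}(\cdot,t)]\|_{L^\infty(\mathbb{T})}$ for $0\leq k\leq n$, uniformly in $t\geq 0$.

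Finally, since $g\in C^n(\mathbb{T})$ is fixed and $\mathbb{T}$ is compact, it remains to control $\|\partial_q^j M_{n_1}(\cdot,t)\|_{L^\infty(\mathbb{T})}$ uniformly in $t$ for $0\leq j\leq n$; this is exactly the type of weighted Sobolev bound guaranteed by~\cite[(17.2)]{Villani2009b} once $f_0$ has sufficient velocity moments and the equation has had positive time $t_0$ to regularise, yielding a constant depending only on $g$, $t_0$, $f_0$, $n$ (and implicitly on $n_1,c$). The main obstacle in this plan is the final step: carefully combining the moment estimates in $p$ with the hypoelliptic regularity in $q$ so as to extract a uniform-in-time bound on the partial derivatives of $M_{n_1}$. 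This is precisely the reason for working on $\mathbb{T}$ rather than $\mathbb{R}$ and for the restart prescription in Assumption~\ref{i:7000}.
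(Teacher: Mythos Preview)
Your proposal is correct and follows essentially the same route as the paper: both write the expectation as an integral against $f_t$, integrate by parts to shift the $\partial^n$ onto $g(q)p^{n_1}f_t(q,p)$, recognise the result as $w\ast \tilde f_t$ (your $h_t$ coincides with the paper's $\tilde f_t$), and apply Young's convolution inequality with $\|w\|_{L^1}=1$ to eliminate $w$. The only place the paper is more explicit is your acknowledged ``main obstacle'': rather than passing to $L^\infty$ and citing~\cite[(17.2)]{Villani2009b} directly for $\partial_q^j M_{n_1}$, the paper bounds $\|\tilde f_t\|_{L^c}$ via a H\"older splitting $|\partial_q^m f_t|=|\partial_q^m f_t|^{2/c}|\partial_q^m f_t|^{(c-2)/c}$ with weights $(1+p^2)^{\pm kc}$, using the Sobolev embedding $W^{n+2,2}(\mathbb{T}\times\mathbb{R})\subset C^m$ to control the second factor pointwise and the weighted $H^k$ bound of~\cite[(17.2)]{Villani2009b} for the first.
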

\begin{proof}
We first prove that, for $f_t(q,p)$ being the probability density function of $(\ov{q}(t),\ov{p}(t))$ and for any $\tilde{g}\in C^0$, we have
\begin{align}\label{i:16}
\int_{\mathbb{T}}{\left|\int_{\mathbb{R}}{\left|{\color{blue1}\tilde{g}(q)}p^{n_1}\right|\left|\frac{\partial^m}{\partial q^m}f_t(q,p)\right|\m p}\right|^c\m q}\leq C(\tilde{g},t_0,f_0,n),\qquad \mbox{for }m\in\{0,1,\dots,n\}.
\end{align}
We use the boundedness of $g$ and the H\"older inequality with exponents $c$ and $c/(c-1)$ to obtain
\begin{align}\label{i:15}
& \int_{\mathbb{T}}{\left|\int_{\mathbb{R}}{|{\color{blue1}\tilde{g}(q)}p^{n_1}|\left|\frac{\partial^m}{\partial q^m}f_t(q,p)\right|\m p}\right|^c\m q} 
\leq C(\tilde{g})\int_{\mathbb{T}}{\left|\int_{\mathbb{R}}{|p^{n_1}|\left|\frac{\partial^m}{\partial q^m}f_t(q,p)\right|^{\frac{2}{c}}\left|\frac{\partial^m}{\partial q^m}f_t(q,p)\right|^{\frac{c-2}{c}}\m p}\right|^c\m q}\nonumber\\
& \quad \leq C(\tilde{g})\int_{\mathbb{T}}{\left(\int_{\mathbb{R}}{|p^{n_1c}|\left|\frac{\partial^m}{\partial q^m}f_t(q,p)\right|^{2}\left|1+p^2\right|^{kc}\m p}\right)\left(\int_{\mathbb{R}}{\left|\frac{\partial^m}{\partial q^m}f_t(q,p)\right|^{\frac{c-2}{c-1}}\left|1+p^2\right|^{-\frac{kc}{c-1}}\m p}\right)^{c-1}\m q}.
\end{align}
The second $p$-integral in~\eqref{i:15} can be bounded by a constant $C(t_0,f_0,n)$, provided we pick $k>\frac{c-1}{2c}$. To see this, we notice that~\citep[(17.2)]{Villani2009b} gives uniform bounds in time for $\|f_t\|_{W^{n+2,2}(\mathbb{T}\times\mathbb{R})}$, where we use the Sobolev space notation. The continuous embedding $W^{n+2,2}(\mathbb{T}\times\mathbb{R})\subset C^{m}(\mathbb{T}\times\mathbb{R})$, which is a result of the application of~\citep[Theorem 4.12, Part I, Case A, equation (1)]{Adams2003a}) thus implies that
\begin{align*}
\sup_{q\in\mathbb{T},p\in\mathbb{R}}{\left|\frac{\partial^m}{\partial q^m}f_t(q,p)\right|}\leq C(t_0,f_0,n),\qquad \mbox{for all } t\geq 0.
\end{align*}
As a result, the argument of the second $p$-integral in~\eqref{i:15} is controlled by $(1+p^2)^{-\frac{kc}{c-1}}$, which is integrable thanks to the choice of $k$. Thus~\eqref{i:15} is bounded by 
\begin{align*}
C(\tilde{g},t_0,f_0,n)\int_{\mathbb{T}}{\int_{\mathbb{R}}{|p^{n_1c}|\left|\frac{\partial^m}{\partial q^m}f_t(q,p)\right|^{2}\left|1+p^2\right|^{kc}\m p}\m q},
\end{align*}
which is in turn uniformly bounded in time due to~\citep[(17.2)]{Villani2009b}. We have thus verified~\eqref{i:16}.
We now define $\tilde{f}_t(q):=\int_{\mathbb{R}}{(\partial^n/\partial q^n)\left\{{\color{blue1}g(q)}p^{n_1}f_t(q,p)\right\}\m p}$. We use integration by parts and Young's inequality for convolutions to bound
\begin{align}\label{i:17}
&\int_{\mathbb{T}}{\left|\mean{{\color{blue1}g(\ov{q}(t))}\ov{p}^{n_1}(t)\frac{\partial^n}{\partial x^n}w(x-\ov{q}(t))}\right|^c\m x} = \int_{\mathbb{T}}{\left|\int_{\mathbb{T}}{\int_{\mathbb{R}}{{\color{blue1}g(q)}p^{n_1}f_t(q,p)\frac{\partial^n}{\partial q^n}w(x-q)\m p}\m q}\right|^c\m x} \nonumber\\
& \quad = \int_{\mathbb{T}}{\left|\int_{\mathbb{T}}{\int_{\mathbb{R}}{w(x-q)\frac{\partial^n}{\partial q^n}\left\{{\color{blue1}g(q)}p^{n_1}f_t(q,p)\right\}\m p}\m q}\right|^c\m x}\nonumber\\
& \quad = \int_{\mathbb{T}}{\left|\int_{\mathbb{T}}{w(x-q) \tilde{f}_t(q)\m q}\right|^c\m x}=\left\|w\ast\tilde{f}_t\right\|^c_{L^c(\mathbb{T})}\leq \left\| w\right\|^c_{L^1(\mathbb{T})}\left\|\tilde{f}_t\right\|^c_{L^c(\mathbb{T})} = \left\|\tilde{f}_t\right\|^c_{L^c(\mathbb{T})}\nonumber\\
&\quad = \int_{\mathbb{T}}{\left|\int_{\mathbb{R}}{\frac{\partial^n}{\partial q^n}\left\{{\color{blue1}g(q)}p^{n_1}f_t(q,p)\right\}\m p}\right|^c\m q}\nonumber\\
&\quad \leq C(n,c)\sum_{j=0}^{n}{\int_{\mathbb{T}}{\left|\int_{\mathbb{R}}{\left|\frac{\partial^j}{\partial q^j}\left\{{\color{blue1}g(q)}\right\}p^{n_1}\frac{\partial^{n-j}}{\partial q^{n-j}}\left\{f_t(q,p)\right\}\right|\m p}\right|^c\m q}}.
\end{align}
As $g\in C^n$, it is clear that each of the $(n+1)$ terms in~\eqref{i:17} is as prescribed by the left-hand-side of~\eqref{i:16}, for some appropriate choices of $\tilde{g}$ and $m$. The proof is complete.
\end{proof}
\begin{rem}
The use of ~\citep[(17.2)]{Villani2009b} is the reason for having $\mathbb{T}$, and not $\mathbb{R}$, as the spatial domain. 
\end{rem}
\begin{rem}\label{irem:1}
With the same notation and assumptions of Propositions \ref{ip:1} and \ref{ip:2}, let the initial datum of the particles systems~\eqref{i:1} and~\eqref{i:2} have density $(\ov{q}_{\aux}(t_0),\ov{p}_{\aux}(t_0))$. It is easy to prove that the particle systems~\eqref{i:1} and~\eqref{i:2} have moments of any order uniformly bounded on $[0,T]$. This is a simple consequence of the boundedness of $W'$.\end{rem}

\subsection{A useful application of the propagation of chaos}\label{ss:2.3}

The result proved in this subsection is used in Section \ref{s:3} in order to provide estimates independent of $\ep$ for the $H^k$-norm of the expressions~\eqref{i:311a} and~\eqref{i:311b}. We use the standard Sobolev space notation $H^k:=H^k(\mathbb{T})$, for $k\in\mathbb{N}$, and also $L^p:=L^p(\mathbb{T})$, for $p\in[1,\infty]$. As already mentioned, we will always assume a scaling of type $N\ep^{\theta}=1$, for $\theta$ large enough, say $\theta>\theta_0$. In this paper, we are not interested in optimising in $\theta$ (i.e., in finding its lowest admissible value). 

\begin{prop}\label{ip:4}
Let the assumptions of Propositions \ref{ip:1} and \ref{ip:2} be satisfied, and let $\mathbb{N}\ni c\geq 2$. Then, in the regime $N\ep^{\theta}=1$, for $\theta$ large enough, we have that
\begin{subequations}\label{i:25}
\begin{align}
& \mean{\left\|\frac{1}{N}\sum_{i=1}^{N}{p^{n_1}_i(t)\frac{\partial^n}{\partial^n x}w_{\ep}(\cdot-q_i(t))}\right\|^c_{L^c}} \label{i:25a}\intertext{and}
& \mean{\left\|\frac{1}{N}\sum_{i=1}^{N}{\left\{\frac{1}{N}\sum_{j=1}^{N}{W'(q_i(t)-q_j(t))}\right\}p^{n_1}_i(t)\frac{\partial^n}{\partial^n x}w_{\ep}(\cdot-q_i(t))}\right\|^c_{L^c}}\label{i:25b}
\end{align}
\end{subequations}
are uniformly bounded in $\ep$, $N$, and $t\in[0,T]$.
\end{prop}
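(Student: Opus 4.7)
The plan is to transfer the estimate from the dependent particle system~\eqref{i:1} to its independent counterpart~\eqref{i:2} via Proposition \ref{ip:1}, and then to combine Proposition \ref{ip:2} (for the mean part) with an independence-based $c$-th moment estimate (for the fluctuation). The scaling $N\ep^{\theta}=1$ will absorb the polynomial blow-up in $\ep^{-1}$ coming from differentiating $w_\ep$ against the $N^{-1/2}$ factors produced by propagation of chaos and by central-limit-type cancellations.

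For~\eqref{i:25a}, set $Y_i(x,t):=p_i^{n_1}(t)\,\partial_x^n w_\ep(x-q_i(t))$ and $\ov{Y}_i(x,t):=\ov{p}_i^{n_1}(t)\,\partial_x^n w_\ep(x-\ov{q}_i(t))$. The scalings $\|\partial_x^n w_\ep\|_{L^\infty}\lesssim \ep^{-(n+1)}$ and $\|\partial_x^{n+1} w_\ep\|_{L^\infty}\lesssim \ep^{-(n+2)}$, together with a mean-value expansion, give the pointwise bound
\begin{align*}
|Y_i(x,t)-\ov{Y}_i(x,t)|\leq C\,\ep^{-(n+2)}\left[(|p_i|+|\ov{p}_i|)^{n_1-1}|p_i-\ov{p}_i|+|\ov{p}_i|^{n_1}|q_i-\ov{q}_i|\right].
\end{align*}
Applying Jensen's inequality in $\omega$ and $x$, H\"older's inequality, Proposition \ref{ip:1} (with $\alpha$ taken large), and the moment bounds of Remark \ref{irem:1}, one obtains $\mean{\|\frac{1}{N}\sum_{i}(Y_i-\ov{Y}_i)\|_{L^c}^c}\lesssim \ep^{-(n+2)c}N^{-c/2}$, which is bounded uniformly under $N\ep^\theta=1$ for $\theta$ large.

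It then suffices to bound $\mean{\|\frac{1}{N}\sum_i\ov{Y}_i\|_{L^c}^c}$. Writing $\ov{Y}_i=\mean{\ov{Y}_1}+(\ov{Y}_i-\mean{\ov{Y}_1})$, the mean contribution $\mean{\ov{Y}_1(\cdot,t)}$ is controlled in $L^c(\mathbb{T})$, uniformly in $\ep,N,t$, by Proposition \ref{ip:2} with $g\equiv 1$. For the centred part $Z_i:=\ov{Y}_i-\mean{\ov{Y}_1}$, independence of the $Z_i$ and a standard combinatorial expansion of the $c$-th moment (in the same spirit as the treatment of $\theta_i(r)$ in the proof of Proposition \ref{ip:1}) yield $\mean{|\frac{1}{N}\sum_i Z_i(x,t)|^c}\leq CN^{-c/2}\mean{|\ov{Y}_1(x,t)|^c}$ for each $x$; integrating over $x\in\mathbb{T}$ using $\|\partial_x^n w_\ep\|_{L^c}^c\lesssim \ep^{-(n+1)c+1}$ and the $p$-moment bounds of Remark \ref{irem:1} produces a bound of order $N^{-c/2}\ep^{-(n+1)c+1}$, again absorbed by the scaling.

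For~\eqref{i:25b}, two preliminary reductions are needed. First, one replaces $\frac{1}{N}\sum_j W'(q_i-q_j)$ with $\frac{1}{N}\sum_j W'(\ov{q}_i-\ov{q}_j)$ using the Lipschitz continuity of $W'$ and Proposition \ref{ip:1}; second, one replaces $\frac{1}{N}\sum_j W'(\ov{q}_i-\ov{q}_j)$ with $g(\ov{q}_i):=W'\ast\mu_t(\ov{q}_i)$ using precisely the independence-centring argument already carried out for $\theta_i(r)$ in the proof of Proposition \ref{ip:1}. Both reductions contribute errors of the form $\ep^{-\mathrm{poly}}N^{-1/2}$. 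After these reductions the scheme of the previous two paragraphs applies, with Proposition \ref{ip:2} now invoked with $g=W'\ast\mu_t$, which lies in $C^\infty$ thanks to the (time-uniform) smoothness of $f_t$ already exploited in Proposition \ref{ip:2}. The main technical obstacle is bookkeeping: a single choice $\theta=\theta_0(n,n_1,c)$ must dominate simultaneously every $\ep^{-1}$-blow-up (from differentiating $w_\ep$, from $L^c$-norms of such derivatives, and from Taylor-expanding $W'$) by the available $N^{-1/2}$ decay (from propagation of chaos applied both to particles and to the coefficient, plus the central-limit cancellation of the kernel sum).
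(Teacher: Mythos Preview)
Your proposal is correct and reaches the same conclusion through a genuinely different decomposition than the paper. The paper expands the $c$-th power of the sum into a product over multi-indices $\mathbf{j}\in\{1,\dots,N\}^c$, splits these into distinct-index contributions $\mathcal{S}_{1,c}$ and repeated-index contributions $\mathcal{S}_{2,c}$, and then, for each $\mathbf{j}\in\mathcal{S}_{1,c}$, further splits every factor into an $A$-part (difference between interacting and independent particles) and a $B$-part (independent-particle quantity). Terms with at least one $A$-factor are controlled by Proposition~\ref{ip:1}; the pure $B$-product collapses, by independence, directly to $\int_{\mathbb{T}}|\mathbb{E}[\ov{Y}_1(x,t)]|^c\,\m x$, which is exactly the quantity estimated by Proposition~\ref{ip:2}. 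For~\eqref{i:25b}, the paper adds a third $A_{3}/B_{3}$ splitting for the $W'$-factor and then conditions on $(\ov{q}_1,\ov{p}_1)$ to reduce to $g=W'\ast\mu_t$ inside the pure-$B$ term.

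Your route instead works at the level of the sum rather than the product: first peel off the propagation-of-chaos error $\frac{1}{N}\sum_i(Y_i-\ov{Y}_i)$ by Jensen and a pointwise mean-value bound, then for the i.i.d.\ sum $\frac{1}{N}\sum_i\ov{Y}_i$ split into mean plus centred fluctuation, handling the mean by Proposition~\ref{ip:2} and the fluctuation by a Rosenthal-type combinatorial bound. For~\eqref{i:25b} you do two further reductions on the coefficient $\frac{1}{N}\sum_jW'(q_i-q_j)$ before invoking this scheme. Both approaches ultimately rest on the same three ingredients (Propositions~\ref{ip:1}, \ref{ip:2}, and a central-limit-type cancellation for i.i.d.\ centred sums), and both require the same kind of $\theta$-bookkeeping. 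Your organisation is arguably cleaner in that it avoids the $2^{2c}$-term product expansion, at the cost of layering more successive triangle inequalities and needing the coefficient reductions in~\eqref{i:25b} to be carried out while the remaining $p_i,q_i$ are still those of the interacting system (which is fine, since $W'\ast\mu_t$ is bounded and H\"older separates the factors). The paper's expansion, on the other hand, makes the structure of every error term fully explicit in one shot.
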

Even though the proof of Proposition \ref{ip:4} is a suitable extension of~\cite[Proof of Proposition 1.1]{Cornalba2018aTR}, we include it here to keep the paper as self-contained as possible. For the benefit of the curious reader, we point out the analogies between the two proofs in the subsequent Remark \ref{irem:40}, which may be skipped on a first reading. \begin{proof}[Proof of Proposition \ref{ip:4}]
We first deal with~\eqref{i:25a}. Set $a_i(x,t):=p^{n_1}_i(t)\frac{\partial^n}{\partial^n x}w_{\ep}(x-q_i(t))$. If we expand the $L^c$-norm, we get
\begin{align*}
& \mean{\left\|\frac{1}{N}\sum_{i=1}^{N}{p^{n_1}_i(t)\frac{\partial^n}{\partial^n x}w_{\ep}(\cdot-q_i(t))}\right\|^c_{L^c}}\\
& \quad = \frac{1}{N^c}\sum_{\mathbf{j}\in \mathcal{S}_{1,c}}{\mean{\int_{\mathbb{T}}{\prod_{k=1}^{c}{a_{j_k}(x,t)}}\m x}}+\frac{1}{N^c}\sum_{\mathbf{j}\in \mathcal{S}_{2,c}}{\mean{\int_{\mathbb{T}}{\prod_{k=1}^{c}{a_{j_k}(x,t)}}\m x}},
\end{align*}
where $\mathcal{S}_{1,c}$ and  $\mathcal{S}_{2,c}$ are given by
\begin{subequations}\label{i:24}
\begin{align}
\mathcal{S}_{1,c} & := \left\{\mathbf{j}=(j_1,\dots,j_{c})\in\{1,\dots,N\}^c:\mathbf{j}\mbox{ does not have repeated components}\right\},\label{i:24a}\\
\mathcal{S}_{2,c} & := \left\{\mathbf{j}=(j_1,\dots,j_{c})\in\{1,\dots,N\}^c:\mathbf{j}\mbox{ has repeated components}\right\}.\label{i:24b}
\end{align}
\end{subequations}
We use the exchangeability of the particles, the fact that $\# \mathcal{S}_{2,c}\leq C(c)N^{c-1}$, the H\"older inequality, and the fact that all moments of $p_i$ are uniformly bounded on $[0,T]$ (see Remark \ref{irem:1}) to obtain 
\begin{align}\label{i:2000}
\frac{1}{N^c}\sum_{\mathbf{j}\in \mathcal{S}_{2,c}}{\mean{\int_{\mathbb{T}}{\prod_{k=1}^{c}{a_{j_k}(x,t)}\m x}}}\leq \frac{C(c)}{N}\int_{\mathbb{T}}{\mean{a^c_1(x,t)}\m x}\leq \frac{\mathbb{Q}(\ep^{-1})}{N}\rightarrow 0\qquad\mbox{as }\ep\rightarrow 0,
\end{align}
where $\mathbb{Q}$ is some polynomial whose degree depends on $n$. The convergence to zero is granted by the scaling $N\ep^{\theta}=1$, assuming that $\theta$ is large enough. 
For each $\mathbf{j}\in\mathcal{S}_{1,c}$, we now analyse $\mean{\int_{\mathbb{T}}{\prod_{k=1}^{c}{a_{j_k}(x,t)}\m x}}$. The particles $\{(q_i,p_i)\}_{i=1}^{N}$ not being independent, we rely on the propagation of chaos, i.e., on Proposition \ref{ip:1}. The strategy is the following: in each $a_{j_k}(x,t)$, we add and subtract relevant quantities associated with~\eqref{i:2}. More specifically, we split
\begin{subequations}\label{i:26}
\begin{align}
p^{n_1}_i(t) & = \underbrace{p^{n_1}_i(t)-\ov{p}^{n_1}_i(t)}_{A_{1,i}:=} + \underbrace{\ov{p}^{n_1}_i(t)}_{B_{1,i}:=},\label{i:26a}\\
\frac{\partial^n}{\partial^n x}w_{\ep}(x-q_i(t)) & = \underbrace{\frac{\partial^n}{\partial^n x}w_{\ep}(x-q_i(t))-\frac{\partial^n}{\partial^n x}w_{\ep}(x-\ov{q}_i(t))}_{A_{2,i}:=} + \underbrace{\frac{\partial^n}{\partial^n x}w_{\ep}(x-\ov{q}_i(t))}_{B_{2,i}:=}\label{i:26b}.
\end{align}
\end{subequations}
The estimates
\begin{subequations}\label{i:200}
\begin{align}
|A_{1,i}| & \leq C(n_1)|p_i(t)-\ov{p}_i(t)|(|p_i(t)|^{n_1-1}+|\ov{p}_i(t)|^{n_1-1}),\label{i:200a}\\
|A_{2,i}| & \leq \mathbb{Q}({\ep}^{-1})|q_i(t)-\ov{q}_i(t)|,\label{i:200b}\\
|B_{2,i}| & \leq \mathbb{Q}({\ep}^{-1})\label{i:200c},
\end{align}
\end{subequations}
where $\mathbb{Q}$ is a polynomial, follow easily from Taylor expansions and bounds on derivatives of $w_{\ep}$.
We regroup the $2^{2c}$ terms arising from the expansion of the product $\prod_{k=1}^{c}{\left(A_{1,j_k}+B_{1,j_k}\right)\left(A_{2,j_k}+B_{2,j_k}\right)}$ as
\begin{align*}
\prod_{k=1}^{c}{\left(A_{1,j_k}+B_{1,j_k}\right)\left(A_{2,j_k}+B_{2,j_k}\right)}=\prod_{k=1}^{c}{B_{1,j_k}B_{2,j_k}}+\sum_{s=1}^{2^{2c}-1}{C_s},
\end{align*}
where the sum spans all $2^{2c}-1$ terms of the expansion which feature at least one factor of type $A$ (i.e., each $C_s$ is a product of $2c$ terms of type $A$ and $B$, with at least one being of type $A$). As a result, we write
\begin{align}\label{i:2001}
\mean{\int_{\mathbb{T}}{\prod_{k=1}^{c}{a_{j_k}(x,t)}\m x}} = \mean{\int_{\mathbb{T}}{\prod_{k=1}^{c}{B_{1,j_k}B_{2,j_k}\m x}}}+\sum_{s=1}^{2^{2c}-1}{\mean{\int_{\mathbb{T}}{C_s\m x}}}:=T_1+T_2.
\end{align}
We bound $T_2$. As each term $C_s$ contains a factor of type $A$, we can use~\eqref{i:200} to deduce that 
\begin{align*}
|C_s| & \leq \left(\prod_{i=1}^{c}{|p_i(t)-\ov{p}_i(t)|^{\alpha_i}|q_i(t)-\ov{q}_i(t)|^{\beta_i}}\right)\\
& \quad\times\left(\prod_{i=1}^{c}{\left[C(n_1)(|p_i(t)|^{n_1-1}+|\ov{p}_i(t)|^{n_1-1})\right]^{\alpha_i}\left[\mathbb{Q}({\ep}^{-1})\right]^{\beta_i}}\right)\\
& \quad\times\left(\prod_{i=1}^{c}{\left[\ov{p}_i(t)\right]^{1-\alpha_i}\left[\mathbb{Q}({\ep}^{-1})\right]^{1-\beta_i}}\right)=:T_3\times T_4\times T_5,
\end{align*}
for some $\alpha_i,\beta_i\in\{0;1\}$, $\sum_{i=1}^{c}{\alpha_i+\beta_i}\in\{1,\dots,2c\}$. We can bound $\mean{|C_s|}$ by applying a multi-factor H\"older inequality involving each term of the product $\mean{T_3\times T_4\times T_5}$. More precisely, the expectation of each term of $T_3$ is either unitary, or dealt with by using Proposition \ref{ip:1} (propagation of chaos); the expectation of each term of $T_4$ and $T_5$ is either unitary, or dealt with by relying on the fact that all moments of $\ov{p}_i(t)$, $p_i(t)$ are uniformly bounded on $[0,T]$, see Remark \ref{irem:1}. Due to the constraint $\sum_{i=1}^{c}{\alpha_i+\beta_i}\in\{1,\dots,2c\}$, we can apply Proposition \ref{ip:1} at least once. Thus $\mean{|C_s|}\leq C(n_1)N^{-\gamma_1}\ep^{-\gamma_2}$, for some $\gamma_1,\gamma_2>0$, for $s=1,\dots,2^{2c}-1$. Provided that $\theta$ is large enough, we deduce that $T_2\rightarrow 0$ as $\ep\rightarrow 0$.

As for $T_1$,  
we rely on independence and identical distribution of the particles $\{(\ov{q}_i,\ov{p}_i)\}_{i=1}^{N}$ and write
\begin{align*}
\mean{T_1} & = \mean{\int_{\mathbb{T}}{\prod_{k=1}^{c}{\ov{p}^{n_1}_i(t)\frac{\partial^n}{\partial^n x}w_{\ep}(x-\ov{q}_i(t))}\m x}}\\
& \leq \int_{\mathbb{T}}{\left|\mean{\ov{p}^{n_1}_1(t)\frac{\partial^n}{\partial x^n}w(x-\ov{q}_1(t))}\right|^c\m x}\leq C(t_0,f_0,n),
\end{align*}
where the last inequality is given by Proposition \ref{ip:2}. The expectation in~\eqref{i:25a} is thus dealt with.

As for the expectation in~\eqref{i:25b}, the analysis proceeds similarly, and we only sketch the relevant details. We may think of the argument of the $L^c$-norm as a sum over two indexes $i,j=1,\dots,N$, thus defining $a_{i,j}(x,t):={W'(q_i(t)-q_j(t))p^{n_1}_i(t)\frac{\partial^n}{\partial^n x}w_{\ep}(x-q_i(t))}$. We split the $L^c$-norm expansion into the contributions given over the index sets $\mathcal{S}_{1,2c}$ and $\mathcal{S}_{2,2c}$ ($c$ couples of indexes). The expectation associated with the index set $\mathcal{S}_{2,2c}$ vanishes in the limit $\ep\rightarrow 0$, using the same arguments leading to~\eqref{i:2000}. Now 
fix $\mathbf{j}\in\mathcal\mathcal{S}_{1,2c}$. If we add the rewriting 
\begin{align*}
W'(q_i(t)-q_j(t))=\underbrace{W'(q_i(t)-q_j(t))-W'(\ov{q}_i(t)-\ov{q}_j(t))}_{A_{3,i,j}:=}+\underbrace{W'(\ov{q}_i(t)-\ov{q}_j(t))}_{B_{3,i,j}:=}
\end{align*} 
to those in~\eqref{i:26}, with the associated bound 
\begin{align*}
|A_{3,i,j}|\leq C(W)\left\{|q_i(t)-\ov{q}_i(t)|+|q_j(t)-\ov{q}_j(t)|\right\}
\end{align*}
we may then write 
\begin{multline}
\mean{\int_{\mathbb{T}}{\prod_{k=1}^{c}{a_{j_{2k-1},j_{2k}}(x,t)}\m x}}  = 
\mean{\int_{\mathbb{T}}{\prod_{k=1}^{c}{B_{1,j_{2k-1}}B_{2,j_{2k-1}}B_{3,j_{2k-1},j_{2k}}\m x}}}+\sum_{s=1}^{2^{3c}-1}{\mean{\int_{\mathbb{T}}{C_s\m x}}}\\
 =: T_1+T_2,
\end{multline}
where the notation is in analogy to~\eqref{i:2001}. The convergence $T_2\rightarrow 0$ is settled as in the first part of the proof, and we omit the details. To bound $T_1$, we simply need to bound 
\begin{align}\label{i:27}
\int_{\mathbb{T}}{\left|\mean{W'(\ov{q}_1(t)-\ov{q}_2(t))\ov{p}^{n_1}_1(t)\frac{\partial^n}{\partial^n x}w_{\ep}(x-\ov{q}_1(t))}\right|^{c}\m x},
\end{align}
where we have used again independence and identical distribution of the particles $\{(\ov{q}_i,\ov{p}_i)\}_{i=1}^{N}$. We notice that
\begin{align*}
& \mean{W'(\ov{q}_1(t)-\ov{q}_2(t))\ov{p}^{n_1}_1(t)\frac{\partial^n}{\partial^n x}w_{\ep}(x-\ov{q}_1(t))\left|\ov{q}_1(t)=z_q,\ov{p}_1(t)=z_p\right.}\\
& \quad = z^{n_1}_p\frac{\partial^n}{\partial^n x}w_{\ep}(x-z_q)\mean{W'(z_q-\ov{q}_2(t))\left|\ov{q}_1(t)=z_q,\ov{p}_1(t)=z_p\right.}\\
& \quad = z^{n_1}_p\frac{\partial^n}{\partial^n x}w_{\ep}(x-z_q)\mean{W'(z_q-\ov{q}_2(t))} =  z^{n_1}_p\frac{\partial^n}{\partial^n x}w_{\ep}(x-z_q)W'\ast \mu_t(z_q),
\end{align*} 
which implies
\begin{align*}
\mean{W'(\ov{q}_1(t)-\ov{q}_2(t))\ov{p}^{n_1}_1(t)\frac{\partial^n}{\partial^n x}w_{\ep}(x-\ov{q}_1(t))}=\mean{W'\ast \mu_t(\ov{q}_1(t))\ov{p}^{n_1}_1(t)\frac{\partial^n}{\partial^n x}w_{\ep}(x-\ov{q}_1(t))}.
\end{align*}
The above equality shows that~\eqref{i:27} is of the form prescribed by Proposition \ref{ip:2}, for $g:=W'\ast \mu_t$; as a matter of fact, $W'\ast \mu_t\in C^n$ because of the uniform regularity of $\mu_{t}$ for $t\in[0,T]$, see~\cite[(17.2)]{Villani2009b}. This ends the proof.
\end{proof}
\begin{rem}\label{irem:40}
The proof of Proposition \ref{ip:4} is built on two splittings. The first one separates the index set in $\mathcal{S}_{1,c}$, $ \mathcal{S}_{2,c}$ (and also $\mathcal{S}_{1,2c}$, $\mathcal{S}_{2,2c}$); the second one distinguishes terms of type $A$ and $B$ for every element in $\mathcal{S}_{1,c}$ (and also in $\mathcal{S}_{1,2c}$). The first splitting benefits from scaling arguments (in $N,\ep$) which are found also in~\cite[Proposition 1.1]{Cornalba2018aTR} (see the distinction between terms $\mathsf{ct}$, and $I_1$--$I_4$ therein). The second splitting benefits from Propagation of chaos, and does not have a counterpart in~\cite[Proposition 1.1]{Cornalba2018aTR}.
\end{rem}
\begin{rem}
In the proof of Proposition \ref{ip:4}, the minimum power $\alpha$ that we need to employ when using the  propagation of chaos is $\alpha=2c$ (for~\eqref{i:25a}) and power $\alpha=3c$ (for~\eqref{i:25b}). In the case of~\eqref{i:25a}, this can be seen easily from the multi-factor H\"older inequality used to deal with the one term $\mean{|C_s|}$ for which $\sum_{i=1}^{c}{\alpha_i+\beta_i}=2c$. An analogous consideration holds for~\eqref{i:25b}. This justifies the need for the propagation of chaos for $\alpha>2$.
\end{rem}

\section{Evolution of the weakly interacting particle system}\label{s:3}

We analyse the time evolution of the densities~\eqref{i:311a}--\eqref{i:311b} and start by deriving the relevant evolution equations.
\begin{lemma} The evolution equations for $\rho_{\ep},j_{\ep}$, and $j_{2,\ep}$ are given by
\begin{subequations}
\begin{align}
\frac{\partial \rho_{\ep}}{\partial t}(x,t) & = -\frac{\partial j_{\ep}}{\partial x}(x,t),\label{i:20a}\\
\frac{\partial j_{\ep}}{\partial t}(x,t) & = -\gamma 
      j_{\ep}(x,t)-j_{2,\ep}(x,t)-\frac{1}{N}\sum_{i=1}^{N}{\left(\frac{1}{N}\sum_{j=1}^{N}{W'(q_i(t)-q_j(t))}\right)w_{\ep}(x-q_i(t))}\nonumber\\
      & \quad +\underbrace{\frac{\sigma}{N}\sum_{i=1}^{N}{w_{\ep}(x-q_i(t))\dot \beta_i}}_{=:\dot{\mathcal{Z}}_N(x,t)}\label{i:20b},\\
\frac{\partial j_{2,\ep}}{\partial t}(x,t) & = -2\gamma 
      j_{2,\ep}(x,t)-j_{3,\ep}(x,t)-\frac{2}{N}\sum_{i=1}^{N}{\left(\frac{1}{N}\sum_{j=1}^{N}{W'(q_i(t)-q_j(t))}\right)p_i(t)w'_{\ep}(x-q_i(t))}\nonumber\\
      & \quad + \sigma^2\frac{\partial \rho_{\ep}}{\partial x}(x,t) + \frac{\sigma}{N}\sum_{i=1}^{N}{2p_{i}(t)w'_{\ep}(x-q_i(t))\dot \beta_i}\label{i:20c},
\end{align}
\end{subequations}
where $j_{3,\ep}:=N^{-1}\sum_{i=1}^{N}{p_i^3(t)w''_{\ep}(x-q_i(t))}$.
\end{lemma}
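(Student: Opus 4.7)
The three identities are derived by straightforward application of It\^o's formula to each of the functionals $\rho_\ep$, $j_\ep$, $j_{2,\ep}$, using the SDE system~\eqref{i:1}. The plan is to differentiate each summand in time, keeping careful track of which ingredients carry Brownian noise. A central observation is that the positions $q_i$ solve \emph{pathwise} ODEs ($\dot q_i=p_i$), so they are of finite variation and carry no quadratic variation: in particular, for any smooth $f$, $f(x-q_i(t))$ is a pure finite-variation process with $\mathrm{d} f(x-q_i)=-f'(x-q_i)p_i\,\mathrm{d} t$, and there are no cross-variation contributions when multiplying such factors against Brownian integrals appearing in $\mathrm{d} p_i$.

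For~\eqref{i:20a}, one differentiates $\rho_\ep(x,t)=N^{-1}\sum_i w_\ep(x-q_i(t))$ in time via the chain rule (no It\^o correction since $q_i$ is of finite variation) to obtain $N^{-1}\sum_i w_\ep'(x-q_i)(-p_i)$; recognising this as $-\partial_x j_\ep$ finishes the computation. For~\eqref{i:20b}, one writes $\mathrm{d}\bigl[p_i(t) w_\ep(x-q_i(t))\bigr]$ by the (It\^o) product rule. Again, the cross-variation between $p_i$ and $w_\ep(x-q_i)$ vanishes (the latter has no Brownian part), so only the two ordinary product terms survive. Substituting $\mathrm{d} p_i=-\gamma p_i\mathrm{d} t - N^{-1}\sum_j W'(q_i-q_j)\mathrm{d} t+\sigma\mathrm{d}\beta_i$ and $\mathrm{d} w_\ep(x-q_i)=-w_\ep'(x-q_i)p_i\mathrm{d} t$, summing over $i$, and normalising by $N$ produces precisely the friction term $-\gamma j_\ep$, the interaction term, the stochastic term $\dot{\mathcal{Z}}_N$, and the transport term $-j_{2,\ep}$ (from the second factor of $p_i\cdot p_i=p_i^2$).

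For~\eqref{i:20c} the only genuine novelty is the appearance of a true It\^o correction. Write the summand as $p_i(t)^2\, w_\ep'(x-q_i(t))$ and expand by the It\^o product rule. The position factor $w_\ep'(x-q_i)$ remains of finite variation with differential $-w_\ep''(x-q_i) p_i\mathrm{d} t$. The square $p_i^2$ needs It\^o's formula applied to $f(y)=y^2$ on the SDE for $p_i$; since $\mathrm{d}\langle p_i\rangle_t=\sigma^2\mathrm{d} t$, one gets
\begin{equation*}
\mathrm{d}(p_i^2)=2p_i\bigl(-\gamma p_i - N^{-1}\textstyle\sum_j W'(q_i-q_j)\bigr)\mathrm{d} t + 2\sigma p_i\,\mathrm{d}\beta_i + \sigma^2\mathrm{d} t.
\end{equation*}
Multiplying by $w_\ep'(x-q_i)$, adding the contribution $p_i^2\cdot(-w_\ep''(x-q_i)p_i)\mathrm{d} t$, summing over $i$ and dividing by $N$ yields the five terms in~\eqref{i:20c}: the $-2\gamma j_{2,\ep}$ term, the interaction term, the cubic-momentum term $-j_{3,\ep}$, the stochastic term, and the crucial It\^o-correction term $\sigma^2 N^{-1}\sum_i w_\ep'(x-q_i)=\sigma^2\,\partial_x\rho_\ep$.

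There is no real obstacle in the proof; the only subtlety worth double-checking is that no cross-variation arises between $w_\ep$-type factors (driven by $q_i$, hence of bounded variation) and the Brownian increments $\mathrm{d}\beta_i$ appearing in $\mathrm{d} p_i$ or $\mathrm{d}(p_i^2)$. Once this is noted, the identities follow by collecting terms and using $N^{-1}\sum_i w_\ep'(x-q_i)=\partial_x\rho_\ep$.
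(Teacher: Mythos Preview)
Your proposal is correct and follows exactly the approach indicated in the paper, which simply states that the lemma is ``a simple application of the It\^o formula, and thus omitted.'' Your careful tracking of which factors carry Brownian noise and which are of finite variation (the $q_i$-dependent terms), together with the identification of the It\^o correction $\sigma^2\,\partial_x\rho_\ep$ in~\eqref{i:20c}, constitutes precisely the omitted computation.
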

The proof of the lemma above is a simple application of the It\^o formula, and thus omitted. 

\subsection{Compactness argument}\label{ss:3.2}

We now turn to the main result of this section. 

\begin{prop}\label{ip:3}
Let $T>0$. Let the assumptions of Propositions \ref{ip:1} and \ref{ip:2} be satisfied. Assume the scaling $N\ep^{\theta}=1$, for $\theta$ large enough. The families of processes $\{\rho_{\ep}\}_{\ep}$, $\{j_{\ep}\}_{\ep}$, and $\{j_{2,\ep}\}_{\ep}$ are tight (hence relatively compact in distribution) in $C(0,T;L^2)$, as $\ep\rightarrow 0$. 
\end{prop}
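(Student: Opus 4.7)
The plan is to combine Prokhorov's theorem with Simon's compactness criterion~\cite[Theorem~5]{Simon1987a}. Since $H^1\hookrightarrow\hookrightarrow L^2$, sets of the form
\begin{align*}
K_{M_1,M_2,\alpha}:=\left\{f\in C(0,T;L^2):\ \sup_t\|f(t)\|_{H^1}\leq M_1,\ [f]_{C^\alpha([0,T];L^2)}\leq M_2\right\}
\end{align*}
are compact in $C(0,T;L^2)$, so the goal reduces to showing that $\rho_\ep$, $j_\ep$, $j_{2,\ep}$ lie in such sets with arbitrarily high probability, uniformly in $\ep$. The spatial bound is immediate from Proposition~\ref{ip:4}: with $c=2$, $n\in\{0,1,2\}$, $n_1\in\{0,1,2,3\}$, combined with Remark~\ref{irem:1}, one obtains an $\ep$-, $N$-, $t$-independent constant such that $\sup_t\mathbb{E}[\|\rho_\ep(t)\|_{H^1}^2+\|j_\ep(t)\|_{H^1}^2+\|j_{2,\ep}(t)\|_{H^1}^2]\leq C$. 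The upgrade from $\sup_t\mathbb{E}$ to $\mathbb{E}\sup_t$ is obtained by integrating~\eqref{i:20b}--\eqref{i:20c} over $[0,t]$, applying Doob's maximal inequality to the martingale parts, and re-invoking Proposition~\ref{ip:4} (at larger $c$ if necessary) on the drift terms.

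For the time regularity, equation~\eqref{i:20a} is deterministic and yields $\|\rho_\ep(t)-\rho_\ep(s)\|_{L^2}\leq\int_s^t\|j_\ep(r)\|_{H^1}\,\m r$, hence Lipschitz regularity in $L^2$ via the spatial bound. For $j_\ep$ and $j_{2,\ep}$, I integrate~\eqref{i:20b}--\eqref{i:20c} between $s$ and $t$ and split into drift and stochastic contributions. The drift is controlled in $L^2$ by Cauchy--Schwarz and Proposition~\ref{ip:4}. The stochastic terms are handled via It\^o's isometry together with the Gaussian-like scalings $\|w_\ep\|_{L^2}^2\leq C\ep^{-1}$ and $\|w_\ep'\|_{L^2}^2\leq C\ep^{-3}$; a representative bound is
\begin{align*}
\mathbb{E}\left\|\int_s^t\dot{\mathcal{Z}}_N\,\m r\right\|_{L^2}^2 \leq \frac{C(t-s)}{N\ep}=C\ep^{\theta-1}(t-s),
\end{align*}
and an analogous $C\ep^{\theta-3}(t-s)$ bound controls the noise term in~\eqref{i:20c}. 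Under $N\ep^\theta=1$ with $\theta$ large enough, all such quantities are $\ep$-uniformly $\leq C|t-s|$. Raising to higher moments via the Burkholder--Davis--Gundy inequality yields, for every $p\geq 1$,
\begin{align*}
\mathbb{E}\|f_\ep(t)-f_\ep(s)\|_{L^2}^{2p}\leq C_p|t-s|^p,\qquad f_\ep\in\{\rho_\ep,j_\ep,j_{2,\ep}\}.
\end{align*}

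For $p$ large, Kolmogorov's continuity theorem then produces a modification of each process which is $\alpha$-H\"older continuous as a map into $L^2$ for any $\alpha<(p-1)/(2p)$, with H\"older seminorm having $\ep$-uniformly bounded moments. Markov's inequality yields $\mathbb{P}(f_\ep\in K_{M_1,M_2,\alpha})\geq 1-\eta$ uniformly in $\ep$, for $M_1, M_2$ large enough, and Prokhorov's theorem concludes.

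\emph{Main obstacle.} The delicate point is the stochastic contribution to $\partial_t j_{2,\ep}$, where the factor $w_\ep'$ produces the most restrictive scaling requirement on $\theta$. The sharper Kolmogorov-only approach of~\cite{Cornalba2018aTR} is unavailable here, because the propagation of chaos (Proposition~\ref{ip:1}) only delivers rates of order $N^{-1/2}$, which blunts the sharpness of the temporal estimates; the two-ingredient Simon criterion is able to accommodate the resulting weaker time bounds.
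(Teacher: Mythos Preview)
Your argument is correct and follows the same overall architecture as the paper (uniform $L^\infty(0,T;H^1)$ bounds via Proposition~\ref{ip:4} plus time-regularity bounds, feeding into Simon/Prokhorov), but the execution of the time-regularity step differs in two respects worth noting.

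First, the paper measures temporal oscillation in $H^{-1}$ rather than $L^2$: it works with $\mathscr{U}=L^\infty(0,T;H^1)\cap C^\beta(0,T;H^{-1})$ and uses Simon's criterion with the triple $H^1\hookrightarrow\hookrightarrow L^2\hookrightarrow H^{-1}$. As the paper itself remarks, this choice is flexible and $L^2$ would work equally well, at the price of a possibly larger $\theta$; your choice $Y=L^2$ is fine. Second, for the stochastic contributions the paper does not go via BDG and Kolmogorov's continuity theorem; instead it invokes the Flandoli--Gatarek estimate~\cite[Lemma~2.1]{Flandoli95} to bound the stochastic convolution in $W^{\alpha,\lambda}(0,T;H^{-1})$ and then embeds into $C^\beta(0,T;H^{-1})$. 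Your BDG route gives the same conclusion and is arguably more elementary.

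One comment on your closing paragraph: your own argument \emph{is} essentially a Kolmogorov tightness argument. Once you have $\mathbb{E}\|f_\ep(t)-f_\ep(s)\|_{L^2}^{2p}\leq C_p|t-s|^p$ uniformly in $\ep$, together with tightness of the time-zero marginals in $L^2$ (which you get from the $H^1$ bound and compact embedding), the Banach-space-valued Kolmogorov tightness criterion already yields tightness in $C(0,T;L^2)$; the Simon layer is then redundant. What the paper flags as ``unavailable'' is presumably the direct pointwise space-time increment estimate used in~\cite{Cornalba2018aTR}, where $\mathbb{E}|f_\ep(x,t)-f_\ep(y,s)|^p$ is bounded straight from the particle representation using independence; that route is indeed blunted by propagation of chaos. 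Your detour through the evolution equations~\eqref{i:20a}--\eqref{i:20c} sidesteps that obstruction, so Kolmogorov (in the vector-valued form) remains perfectly usable in your hands.
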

\begin{proof}
Assume for the time being (we will show this below) that 
\begin{align}\label{i:28}
\mean{\|\rho_{\ep}\|_{\mathscr{U}}}, \quad\mean{\|j_{\ep}\|_{\mathscr{U}}},\quad \mean{\|j_{2,\ep}\|_{\mathscr{U}}}\quad \mbox{ are uniformly bounded as }\ep\rightarrow 0,
\end{align}
where $\|\cdot\|_{\mathscr{U}}$ is the natural norm of the space 
\begin{align}\label{i:4000}
\mathscr{U}:= L^{\infty}(0,T;H^1)\cap C^{\beta}(0,T;H^{-1}),\quad\mbox{ for some }\beta\in(0,1/2).
\end{align}
Using~\cite[Theorem 5]{Simon1987a}, it is straightforward to deduce that the embedding $\mathscr{U}\hookrightarrow\mathscr{Z}:=C(0,T;L^2)$ is compact. In addition, the sets $G_j:=\{u\in\mathscr{U}:\|u\|_{\mathscr{U}}\leq j\}$ are compact in $\mathscr{Z}$, for each $j\in\mathbb{N}$.  
Now fix $a>0$. If we denote the law of $\rho_{\ep}$ by $\chi_{\ep}$, we get
\begin{align*}
\chi_{\ep}\left(\mathscr{Z}\setminus G_j\right)=\int_{\mathscr{Z}\setminus G_j}{\chi_{\ep}(\m \rho)}=\int_{\mathscr{U}\setminus G_j}{\chi_{\ep}(\m \rho)}\leq \frac{1}{j}\int_{\mathscr{U}}{\|\rho\|_{\mathscr{U}}\chi_{\ep}(\m \rho)}\leq a
\end{align*}
for all $\ep\in(0,1]$, provided that $j$ is large enough, thanks to~\eqref{i:28}. An analogous argument applies to $\{j_{\ep}\}_{\ep}$ and $\{j_{2,\ep}\}_{\ep}$. This corresponds to tightness for the families $\{\rho_{\ep}\}_{\ep}$, $\{j_{\ep}\}_{\ep}$, and $\{j_{2,\ep}\}_{\ep}$, hence the Prokhorov Theorem~\cite[Theorem 14.3]{Kallenberg2002a} is applicable and gives relative compactness in distribution for the three families. In order to complete the proof, we need to show~\eqref{i:28}.

\emph{Uniform bounds for $\{\rho_{\ep}\}_{\ep}$}. We show that 
\begin{subequations}
\label{i:21}
\begin{align}
 \mean{\|\rho_{\ep}\|_{L^{\infty}(0,T;H^1)}} & \leq C,\label{i:21a}\\
 \mean{\|\rho_{\ep}\|_{C^{\beta}(0,T;H^{-1})}} & \leq C,\label{i:21b}
 \end{align}
\end{subequations}
for a constant $C$, independent of $\ep$ and $N$. 
Using~\eqref{i:20a}, we deduce
\begin{align*}
\|\rho_{\ep}\|^2_{L^{\infty}(0,T;H^1)}=\sup_{t\in[0,T]}{\|\rho_{\ep}(\cdot,t)\|^2_{H^1}}\leq 2\|\rho_{\ep}(\cdot,0)\|^2_{H^1}+2T\int_{0}^{T}{\|j_{\ep}(\cdot,s)\|^2_{H^2}\m s}.
\end{align*}
Estimate~\eqref{i:21a} is then settled by invoking Proposition \ref{ip:4}. We now take $v\in H^1$ and compute 
\begin{align}\label{i:30c}
\left|\langle \rho_{\ep}(\cdot,t)-\rho_{\ep}(\cdot,s),v\rangle_{L^2}\right| & = \left|\int_{\mathbb{T}}{\left[\rho_{\ep}(x,t)-\rho_{\ep}(x,s)\right]v(x)\m x}\right|=\left|\int_{\mathbb{T}}{\left(\int_{s}^{t}{-\nabla\cdot j_{\ep}(x,z)\m z}\right)v(x)\m x}\right|\nonumber\\
& = \left|\int_{\mathbb{T}}{\left(\int_{s}^{t}{j_{\ep}(x,z)\m z}\right)\nabla v(x)\m x}\right| \leq \left\|\int_{s}^{t}{j_{\ep}(\cdot,z)\m z}\right\|_{L^2}\|v\|_{H^1}\nonumber\\
& \leq |t-s|^{\frac{1}{2}}\left(\int_{s}^{t}{\left\|j_{\ep}(\cdot,z)\right\|^2_{L^2}\m z}\right)^{\frac{1}{2}}\|v\|_{H^1}.
\end{align}
The bound $x\leq 1+x^2$ valid for any $x\in\mathbb{R}$, the definition of the usual norm of $C^{\beta}(0,T;H^{-1})$, and~\eqref{i:30c} imply
\begin{align}\label{i:31c}
\mean{\|\rho_{\ep}\|_{C^{\beta}(0,T;H^{-1})}}\leq C + C\,\mean{\|\rho_{\ep}(\cdot,0)\|^2_{L^2}+\int_{0}^{T}{\left\|j_{\ep}(\cdot,z)\right\|^2_{L^2}\m z}}\leq C,
\end{align}
for some $\beta\in(0,1/2)$, where the last inequality follows from Proposition \ref{ip:4}. We have thus proved~\eqref{i:21b}.

\emph{Uniform bounds for $\{j_{\ep}\}_{\ep}$}. Again, we show that there exists a constant $C$, independent of $\ep$ and $N$, such that
\begin{subequations}
\label{i:29}
\begin{align}
 \mean{\|j_{\ep}\|_{L^{\infty}(0,T;H^1)}} & \leq C,\label{i:29a}\\
 \mean{\|j_{\ep}\|_{C^{\beta}(0,T;H^{-1})}} & \leq C.\label{i:29b}
\end{align}
\end{subequations}
We use~\eqref{i:20b} and deduce that
\begin{align*}
\|j_{\ep}\|^2_{L^{\infty}(0,T;H^1)} & = \sup_{t\in[0,T]}{\|j_{\ep}(\cdot,t)\|^2_{H^1}}\\
 & \leq C\left\{\|j_{\ep}(\cdot,0)\|^2_{H^1}+\gamma \int_{0}^{T}{\|j_{\ep}(\cdot,z)\|^2_{H^1}\m z}+\int_{0}^{T}{\|j_{2,\ep}(\cdot,z)\|^2_{H^1}\m z}\right.\nonumber\\
& \quad + \int_{0}^{T}{\left\|\frac{1}{N}\sum_{i=1}^{N}{\left(\frac{1}{N}\sum_{j=1}^{N}{W'(q_i(z)-q_j(z))}\right)w_{\ep}(\cdot-q_i(z))}\right\|^2_{H^1}\m z}\nonumber\\
& \left.\quad +\sup_{t\in[0,T]}{\left\|\int_{0}^{t}{\frac{\sigma}{N}\sum_{i=1}^{N}{w_{\ep}(\cdot-q_i(z))\m \beta_i}}\right\|^2_{H^1}}\right\}=:T_1+\dots+T_5.
\end{align*} 
Uniform bounds for $\mean{T_1},\mean{T_2},\mean{T_3}$, and $\mean{T_4}$ are directly given by Proposition \ref{ip:4}. As for $\mean{T_5}$, we invoke~\cite[Theorem 4.36]{Da-Prato2014a} and bound
\begin{align*}
\mean{T_5} & \leq C\,\mean{\int_{0}^{T}{\sum_{i=1}^{N}{\left\|\frac{\sigma}{N}w_{\ep}(\cdot-q_i(s))\right\|^2_{H^1}}\m s}} 
 = C\int_{0}^{T}{\frac{\sigma^2}{N^2}\sum_{i=1}^{N}{\mean{\left\|w_{\ep}(\cdot-q_i(s))\right\|^2_{H^1}}}\m s}\\ 
 & \leq CT\frac{\sigma^2}{N^2}N\left(\frac{1}{\ep}+\frac{1}{\ep^3}\right) \leq \frac{CT\sigma^2}{N\ep^3},
\end{align*}
where the reader is also referred to~\cite[Proof of Proposition 1.1]{Cornalba2018aTR} for the scalings of Sobolev norms of $w_\ep(\cdot-q_i(s))$, which we have used in the second line above. Estimate~\eqref{i:29a} is thus established. In order to prove~\eqref{i:29b}, we analyse the quantity $\left|\langle j_{\ep}(\cdot,t)-j_{\ep}(\cdot,s),v\rangle_{H^{-1},H^1}\right|$. Bounding the relevant contributions coming from the initial datum and the three deterministic integrands is analogous to~\eqref{i:30c}--\eqref{i:31c}. As for the stochastic noise, we rely on~\cite[Lemma 2.1]{Flandoli95} and write, for $\alpha\in(0,1/2)$ and $\lambda>2$ satisfying $\alpha\lambda>1$,
\begin{align*}
& \mean{\left\|\int_{0}^{\cdot}{\frac{\sigma}{N}\sum_{i=1}^{N}{w_{\ep}(\cdot-q_i(t))}\m \beta_i(s)}\right\|^{\lambda}_{W^{\alpha,\lambda}(0,T;H^{-1})}} \\
& \quad \leq C(\alpha,\lambda)\mean{\int_{0}^{T}{\frac{\sigma^{\lambda}}{N^{\lambda}}\left(\sum_{i=1}^{N}{\left\|w_{\ep}(\cdot-q_i(s))\right\|^2_{L^2}}\right)^{\lambda/2}}\m s}\\
& \quad \leq C(\alpha,\lambda)T\frac{\sigma^{\lambda}}{N^{\lambda}}\left(\frac{CN}{\ep}\right)^{\lambda/2}=\frac{C(\alpha,\lambda,\sigma)T}{(N\ep)^{\lambda/2}}. 
\end{align*}
We conclude the analysis for $\mean{T_5}$ using the embedding $W^{\alpha,\lambda}(0,T;H^{-1})\hookrightarrow C^{\beta}(0,T;H^{-1})$ for some $\beta\in(0,\alpha-1/\lambda)$. This embedding is a consequence, e.g., of~\cite{DiBlasio1984}. 
Thus~\eqref{i:29b} is settled. 

\emph{Uniform bounds for $\{j_{2,\ep}\}_{\ep}$}. The argument is almost identical to that used for the family $\{j_{\ep}\}_{\ep}$. We show that 
\begin{subequations}
\label{i:30}
\begin{align}
 \mean{\|j_{2,\ep}\|_{L^{\infty}(0,T;H^1)}} & \leq C,\label{i:30a}\\
 \mean{\|j_{2,\ep}\|_{C^{\beta}(0,T;H^{-1})}} & \leq C,\label{i:30b}
\end{align}
\end{subequations}
for a constant $C$, independent of $\ep,N$. We use~\eqref{i:20c} and deduce that
\begin{align*}
\|j_{2,\ep}\|^2_{L^{\infty}(0,T;H^1)} & = \sup_{t\in[0,T]}{\|j_{2,\ep}(\cdot,t)\|^2_{H^1}} \\
& \leq C\left\{\|j_{2,\ep}(\cdot,0)\|^2_{H^1}+\gamma \int_{0}^{T}{\|j_{2,\ep}(\cdot,z)\|^2_{H^1}\m z}+\int_{0}^{T}{\|j_{3,\ep}(\cdot,z)\|^2_{H^1}\m z}\right.\nonumber\\
& \quad + \int_{0}^{T}{\left\|\frac{1}{N}\sum_{i=1}^{N}{\left(\frac{1}{N}\sum_{j=1}^{N}{W'(q_i(z)-q_j(z))}\right)p_i(t)w'_{\ep}(\cdot-q_i(z))}\right\|^2_{H^1}\m z}\nonumber\\
& \left.\quad +\sup_{t\in[0,T]}{\left\|\int_{0}^{t}{\frac{\sigma}{N}\sum_{i=1}^{N}{p_i(t)w'_{\ep}(\cdot-q_i(z))\m \beta_i}}\right\|^2_{H^1}}\right\}=:T_1+\dots+T_5.
\end{align*} 
The analysis involving the terms $T_1,\dots,T_4$ is analogous to that of the homonyms for $\{j_{\ep}\}_{\ep}$. We only need to deal with the stochastic noise. As for $\mean{T_5}$, 
\begin{align}\label{i:31}
\mean{T_5} & \leq C\,\mean{\int_{0}^{T}{\sum_{i=1}^{N}{\left\|\frac{\sigma}{N}p_i(t)w'_{\ep}(\cdot-q_i(s))\right\|^2_{H^1}}\m s}} 
 = C\int_{0}^{T}{\frac{\sigma^2}{N^2}\sum_{i=1}^{N}{\mean{p_i^2(t)\left\|w'_{\ep}(\cdot-q_i(s))\right\|^2_{H^1}}}}\nonumber\\ 
 & \leq CT\frac{\sigma^2}{N^2}N\mean{p_1^2(t)}\left(\frac{1}{\ep^3}+\frac{1}{\ep^5}\right) \leq \frac{CT\sigma^2}{N\ep^5}.
\end{align}
For $\alpha$ and $\lambda$ as in the previous part of the proof, we use the $\ell^p$-H\"older inequality and bound
\begin{align}\label{i:32}
& \mean{\left\|\int_{0}^{\cdot}{\frac{\sigma}{N}\sum_{i=1}^{N}{p_i(t)w'_{\ep}(\cdot-q_i(t))}\m \beta_i(s)}\right\|^{\lambda}_{W^{\alpha,\lambda}(0,T;H^{-1})}}\nonumber \\
& \quad \leq \frac{C(\alpha,\lambda)\sigma^{\lambda}}{N^{\lambda}}\int_{0}^{T}{\mean{\left(\sum_{i=1}^{N}{\left\|p_i(s)w'_{\ep}(\cdot-q_i(s))\right\|^2_{L^2}}\right)^{\lambda/2}}\m s} \nonumber\\
& \quad \leq \frac{C(\alpha,\lambda)\sigma^{\lambda}}{N^{\lambda}\ep^{3\lambda/2}}\int_{0}^{T}{\mean{\left(\sum_{i=1}^{N}{p^2_i(s)}\right)^{\lambda/2}}\m s}\nonumber \\
& \quad \leq \frac{C(\alpha,\lambda)\sigma^{\lambda}}{N^{\lambda}\ep^{3\lambda/2}}\int_{0}^{T}{N^{\lambda/2-1}\mean{\left(\sum_{i=1}^{N}{p^{\lambda}_i(s)}\right)}\m s} = \frac{C(\alpha,\lambda)\sigma^{\lambda}}{N^{\lambda/2}\ep^{3\lambda/2}}\int_{0}^{T}{\mean{p^{\lambda}_1(s)}\m s} = \frac{C(\alpha,\lambda,T)\sigma^{\lambda}}{N^{\lambda/2}\ep^{3\lambda/2}}.
\end{align}
Inequalities~\eqref{i:31} and~\eqref{i:32} allow us to deduce~\eqref{i:30a} and~\eqref{i:30b}, and the proof is complete. 
\end{proof}
\begin{rem}\label{irem:5}
In contrast to the methodology employed in~\cite[Proposition 1.1]{Cornalba2018aTR}, which settles tightness in the case of independent particles, the proof of Proposition \ref{ip:3} does not rely on the Kolmogorov criterion. The reason is that the time regularity associated with the application of the propagation of chaos is not sufficiently high.
\end{rem}
\begin{rem}
In principle, there is more than one natural choice for the definition of the space $\mathscr{U}$. Specifically, in~\eqref{i:4000}, one might replace $H^{-1}$ with any $H^{-k}$, where $k\in\mathbb{N}\cup\{0\}$, thus including $L^2$. This would result in adapting estimate~\eqref{i:30c} in the case of $\{\rho_{\ep}\}_{\ep}$ (and analogous expressions in the case of $\{j_{\ep}\}_{\ep}$ and $\{j_{2,\ep}\}_{\ep}$), thus invoking Proposition \ref{ip:4} with a different parameter $n$. This directly reflects in a possibly different requirement for the scaling $N\ep^{\theta}=1$. Since we are not concerned with the lowest admissible value of $\theta$, the choice of $H^{-1}$ is as good as any other of those listed above.
\end{rem}

\subsection{Approximating the interaction term}\label{ss:3.3}

We show that the third term of the right-hand-side of~\eqref{i:20b} is asymptotically equivalent (in the limit $\ep\rightarrow 0$ and $N\rightarrow 0$) to the nonlocal interaction term $\{W'\ast \rho_\ep\}\rho_{\ep}$.

\begin{prop}\label{ip:6}
Let $T>0$. Let the assumptions of Propositions \ref{ip:1} and \ref{ip:2} be satisfied. Assume the scaling $N\ep^{\theta}=1$, for $\theta$ large enough. We have the equality
\begin{align}\label{i:33}
\frac{1}{N}\sum_{i=1}^{N}{\left(\frac{1}{N}\sum_{j=1}^{N}{W'(q_i(t)-q_j(t))}\right)w_{\ep}(x-q_i(t))}=\left\{W'\ast\rho_{\ep}(\cdot,t)\right\}(x)\rho_\ep(x,t)+r_{1,\ep}\rho_{\ep}(x,t)+r_{2,\ep},
\end{align}
where $r_1$ and $r_2$ 
are stochastic remainders such that $|r_{1,\ep}|\leq C(W)\sqrt{\ep}$ and $\mean{|r_{2,\ep}|}\leq C(W,f_0)\{\sqrt{\ep}+\ep^{\beta}\}$,  
for some $\beta=\beta(\theta)>0$, and where $f_0$ is as in Proposition \ref{ip:2}.
\end{prop}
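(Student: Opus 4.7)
The plan is to control the error $\text{LHS}-\{W'\ast\rho_\ep(\cdot,t)\}(x)\rho_\ep(x,t)$ directly, decomposing it into pieces that are either pointwise bounded by a multiple of $\sqrt{\ep}\,\rho_\ep(x,t)$ (to be absorbed into $r_{1,\ep}\rho_\ep$) or super-exponentially small in $\ep$ (absorbed into $r_{2,\ep}$). Setting $G_i:=N^{-1}\sum_j W'(q_i-q_j)$ and $F(y):=(W'\ast\rho_\ep(\cdot,t))(y)$, the left-hand side equals $N^{-1}\sum_i G_i\,w_\ep(x-q_i)$ while the main term on the right equals $F(x)\rho_\ep(x,t)$, so that
$$\text{LHS}-F(x)\rho_\ep(x,t)=\frac{1}{N}\sum_i[G_i-F(q_i)]\,w_\ep(x-q_i)+\frac{1}{N}\sum_i[F(q_i)-F(x)]\,w_\ep(x-q_i).$$

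For the first sum, I rewrite $F(q_i)=N^{-1}\sum_j(W'\ast w_\ep)(q_i-q_j)$ and estimate $W'(z)-(W'\ast w_\ep)(z)=\int[W'(z)-W'(z-s)]w_\ep(s)\,\m s$ using $|W'(z)-W'(z-s)|\leq\|W''\|_\infty|s|$ together with $\int|s|w_\ep(s)\,\m s\leq C\ep$ (since the $\ep$-scalings of the moments of $w_\ep$ match those of a Gaussian of variance $\ep^2$, cf.\ the discussion after~\eqref{i:18}). This yields $|G_i-F(q_i)|\leq C(W)\ep$ uniformly in $i$ and $\omega$, so the first sum is pathwise dominated by $C(W)\ep\,\rho_\ep(x,t)\leq C(W)\sqrt{\ep}\,\rho_\ep(x,t)$ and can be absorbed into $r_{1,\ep}\rho_\ep$.

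For the second sum, the bound $|F(q_i)-F(x)|\leq\|W''\|_\infty|q_i-x|$ (valid because $F'=W''\ast\rho_\ep$ has $L^\infty$-norm at most $\|W''\|_\infty$) allows the index sum to be split using the indicator $\mathbf{1}_{|x-q_i|\leq\sqrt{\ep}}$. On the near region, $|F(q_i)-F(x)|\leq\|W''\|_\infty\sqrt{\ep}$, giving a pathwise contribution bounded by $C(W)\sqrt{\ep}\,\rho_\ep(x,t)$, which also joins $r_{1,\ep}\rho_\ep$. On the far region $|x-q_i|>\sqrt{\ep}$, the inequality $|\sin(y/2)|\geq|y|/4$ forces $w_\ep(x-q_i)\leq C\ep^{-1}e^{-c/\ep}$ for some $c>0$, and combined with the crude bound $|F(q_i)-F(x)|\leq 2\|W'\|_\infty$ this piece is pathwise dominated by $C(W)\ep^{-1}e^{-c/\ep}$, which is $o(\ep^\beta)$ for every $\beta>0$ and joins $r_{2,\ep}$. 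Defining $r_{1,\ep}$ as the quotient of the aggregated near-region/first-sum contribution by $\rho_\ep$ (with the convention $0/0:=0$, valid since the numerator is a nonnegative sum of terms proportional to $w_\ep(x-q_i)$ and so vanishes wherever $\rho_\ep$ does) produces $|r_{1,\ep}|\leq C(W)\sqrt{\ep}$ pathwise and $|r_{2,\ep}|\leq C(W)\ep^{-1}e^{-c/\ep}$ deterministically, which is stronger than the claimed expectation bound.

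The hardest point is the bookkeeping that frees the $\sqrt{\ep}$ factor from the multiplicative $\rho_\ep$, but this works cleanly thanks to the nonnegativity and kernel structure of the near-region piece. The weaker form $\mean{|r_{2,\ep}|}\leq C(W,f_0)\{\sqrt{\ep}+\ep^\beta\}$ in the statement suggests an alternative route in which one first replaces each $q_i$ by the auxiliary $\overline q_i$ via Proposition~\ref{ip:1}, accruing $L^1$-errors of size $N^{-1/2}\mathbb{Q}(\ep^{-1})=\ep^\beta$ under $N\ep^\theta=1$, and then exploits the independence of $\{\overline q_i\}$ together with the Fokker--Planck regularity supplied by Proposition~\ref{ip:2}; the dependence of the constant on $f_0$ would enter through this second step.
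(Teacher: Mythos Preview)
Your argument is correct and in fact yields a stronger conclusion than the paper's: you obtain a deterministic pathwise bound $|r_{2,\ep}|\leq C(W)\ep^{-1}e^{-c/\ep}$, whereas the paper only controls $\mean{|r_{2,\ep}|}$. One small point: the $0/0$ convention is unnecessary, since the von Mises kernel $w_\ep$ is strictly positive on $\mathbb{T}$, so $\rho_\ep(x,t)>0$ everywhere and the quotient defining $r_{1,\ep}$ is always well-defined; your bound $|r_{1,\ep}|\leq C(W)\sqrt{\ep}$ then follows from $|N^{-1}\sum_i c_i\,w_\ep(x-q_i)|\leq(\max_i|c_i|)\,\rho_\ep$.

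The route is genuinely different from the paper's. The paper first replaces $W'(q_i-q_j)$ by $W'(x-q_j)$, so that the double sum factors as $\rho_\ep(x,t)\cdot N^{-1}\sum_j W'(x-q_j)$; it then uses Lemma~\ref{ilem:3} to approximate $W'(x-q_j)$ by $\int W'(x-y)w_\ep(y-q_j)\m y$, producing $r_{1,\ep}$. The residual $r_{2,\ep}$ there equals $N^{-1}\sum_i[G_i-N^{-1}\sum_jW'(x-q_j)]w_\ep(x-q_i)$, and the paper bounds its expectation by first passing to the auxiliary independent particles $\ov q_i$ via Proposition~\ref{ip:1} (this is the source of the $\ep^\beta$ term) and then invoking the Fokker--Planck regularity of $f_{\ov q}$ (this is where the dependence on $f_0$ enters). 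Your decomposition, by inserting $F(q_i)$ rather than $N^{-1}\sum_jW'(x-q_j)$, keeps both error pieces in a form where the coefficient of each $w_\ep(x-q_i)$ is itself small, so that the near/far splitting disposes of everything pathwise. What the paper's approach buys is consistency with the machinery (Propositions~\ref{ip:1} and~\ref{ip:2}) already developed for other purposes in Section~\ref{s:2}; what your approach buys is that neither propagation of chaos nor any information about the law of the particles is needed for this particular proposition, and the bound on $r_{2,\ep}$ is sharper. Your final paragraph correctly anticipates the paper's strategy.
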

Before we prove the result above, we recall a simple lemma.
\begin{lemma}\label{ilem:3}
Let $f\in \mathcal{C}^0(\mathbb{T})$ be a Lipschitz function. There is a constant $C=C(f)$, independent of $\ep>0$ and $a\in\mathbb{T}$, such that
$
\left|\int_{\mathbb{T}}{w_{\ep}(y-a)f(y)\emph{\m} y}-f(a)\right|\leq C\left(\sqrt{\ep}+\exp\left\{-C\ep^{-1}\right\}\right).
$
\end{lemma}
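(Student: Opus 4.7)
The plan is to reduce the statement to the case $a=0$ via the change of variables $z=y-a$ (using that $w_\ep$ and the integral are well-defined on $\mathbb{T}$ and $w_\ep$ is a probability density so $\int_\mathbb{T} w_\ep(z)\,\mathrm{d}z = 1$). This rewrites the left-hand side as
\begin{align*}
\left|\int_{\mathbb{T}}{w_{\ep}(z)[f(z+a)-f(a)]\,\mathrm{d}z}\right|,
\end{align*}
so only the \emph{local} behaviour of $f$ near $a$ and the concentration of $w_\ep$ near $0$ matter.

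Next, I would split the torus into the two regions $A_\ep := \{z\in\mathbb{T}:|z|\leq\sqrt{\ep}\}$ and its complement (identifying $\mathbb{T}$ with $[-\pi,\pi)$ so that $|z|$ denotes the natural torus distance to $0$). On $A_\ep$, the Lipschitz constant $L=L(f)$ gives $|f(z+a)-f(a)|\leq L\sqrt{\ep}$, and since $w_\ep$ integrates to $1$ the contribution is at most $L\sqrt{\ep}$. On $\mathbb{T}\setminus A_\ep$, use the explicit form of $w_\ep$ and the inequality $|\sin(z/2)|\geq |z|/4$ (valid on $[-\pi,\pi]$, as recorded in the paper) to estimate
\begin{align*}
\sin^2(z/2)/(\ep^2/2) \geq \frac{2(|z|/4)^2}{\ep^2}\geq \frac{1}{8\ep},\qquad z\in\mathbb{T}\setminus A_\ep,
\end{align*}
so that $w_\ep(z)\leq Z_\ep^{-1}\exp(-1/(8\ep))$. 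Combined with $Z_\ep\geq C_1\ep$ and $|f(z+a)-f(a)|\leq 2\|f\|_\infty$, integrating over $\mathbb{T}\setminus A_\ep$ produces a bound of the form $C(f)\ep^{-1}\exp(-1/(8\ep))$, which is dominated by $C'(f)\exp(-C''/\ep)$ for suitable constants (absorbing the algebraic prefactor by mildly decreasing the constant in the exponent).

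Summing the two contributions yields the claimed estimate $C(f)(\sqrt{\ep}+\exp(-C/\ep))$, uniformly in $a\in\mathbb{T}$ since translation only relabels the integration variable. There is no real obstacle: the argument is a standard ``inner/outer'' localisation for concentration kernels, and the only delicate point is verifying that the exponential-decay constant for $w_\ep$ scales like $\ep^{-1}$ rather than being lost in the normalising factor $Z_\ep$, which is handled by the two-sided bound $C_1\ep\leq Z_\ep\leq C_2\ep$ recorded earlier in the paper.
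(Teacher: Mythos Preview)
Your proof is correct and follows essentially the same inner/outer splitting as the paper: both decompose the integral over a ball of radius $\sqrt{\ep}$ around the centre and its complement, use the Lipschitz bound on the inner piece, and invoke the exponential tail $\int_{\mathbb{T}\setminus A_\ep} w_\ep \leq C\exp(-C\ep^{-1})$ on the outer piece. Your version is slightly tidier in that you first centre at $a=0$ and bound $|f(z+a)-f(a)|$ directly, whereas the paper establishes one-sided inequalities using $\min f$ and $\max f$ and then argues the reverse direction by symmetry; but this is a cosmetic difference, not a different idea.
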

\begin{proof}
Let $A_\ep:=(a-\sqrt{\ep},a+\sqrt{\ep})$. Since $f$ is Lipschitz, we obtain
\begin{align}\label{i:100}
\int_{\mathbb{T}}{w_{\ep}(y-a)f(y)\m y} & =\int_{A_{\ep}}{w_{\ep}(y-a)f(y)\m y}+\int_{\mathbb{T}\setminus A_{\ep}}{w_{\ep}(y-a)f(y)\m y} \nonumber\\
& \geq (f(a)-C\sqrt{\ep})\int_{A_{\ep}}{w_{\ep}(y-a)\m y} + \min_{x\in\mathbb{T}}{f}\int_{\mathbb{T}\setminus A_{\ep}}{w_{\ep}(y-a)\m y} \nonumber\\
& \geq f(a)\left(1-\int_{\mathbb{T}\setminus A_{\ep}}{w_{\ep}(y-a)\m y}\right)-C\sqrt{\ep} + \min_{x\in\mathbb{T}}{f}\int_{\mathbb{T}\setminus A_{\ep}}{w_{\ep}(y-a)\m y}.
\end{align}
It is immediate to notice that $\int_{\mathbb{T}\setminus A_{\ep}}{w_{\ep}(y-a)\m y}\leq C\exp\left\{-C\ep^{-1}\right\}$ for some $C>0$. From~\eqref{i:100}, we obtain
\begin{align*}
\int_{\mathbb{T}}{w_{\ep}(y-a)f(y)\m y}-f(a) & \geq -f(a)\int_{\mathbb{T}\setminus A_{\ep}}{w_{\ep}(y-a)\m y} +\min_{x\in\mathbb{T}}{f}\int_{\mathbb{T}\setminus A_{\ep}}{w_{\ep}(y-a)\m y} -C\sqrt{\ep} \\
& \geq C\left(\min_{x\in\mathbb{T}}{f}-\max_{x\in\mathbb{T}}{f}\right)\exp\left\{-C\ep^{-1}\right\}-C\sqrt{\ep}.
\end{align*}
An analogous inequality (with opposite sign) may be obtained in a similar way, completing the proof. 
\end{proof}
\begin{proof}[Proof of Proposition \ref{ip:6}]
We split the left-hand-side of~\eqref{i:33} as $T_1+T_2$, where
\begin{subequations}
\begin{align*}
T_1 & := \frac{1}{N}\sum_{i=1}^{N}{\left(\frac{1}{N}\sum_{j=1}^{N}{W'(x-q_j(t))}\right)w_{\ep}(x-q_i(t))}\intertext{and}
T_2 & := \frac{1}{N}\sum_{i=1}^{N}{\left(\frac{1}{N}\sum_{j=1}^{N}{\left\{W'(q_i(t)-q_j(t))-W'(x-q_j(t))\right\}}\right)w_{\ep}(x-q_i(t))}.
\end{align*}
\end{subequations}
As for $T_1$, we separate the sums in $i$ and $j$ and deduce
\begin{align*}
T_1 & = \left(\frac{1}{N}\sum_{i=1}^{N}{w_{\ep}(x-q_i(t))}\right)\left(\frac{1}{N}\sum_{j=1}^{N}{W'(x-q_j(t))}\right)=\rho_{\ep}(x,t)\left(\frac{1}{N}\sum_{j=1}^{N}{W'(x-q_j(t))}\right)\\
& = \rho_{\ep}(x,t)\left(r_{1,\ep}+\frac{1}{N}\sum_{j=1}^{N}{\int_{\mathbb{T}}{W'(x-y)w_{\ep}(y-q_j(t))\m y}}\right)\\
& =\left\{W'\ast\rho_{\ep}(\cdot,t)\right\}\rho_\ep(x,t)+r_{1,\ep}\rho_{\ep}(x,t).
\end{align*} 
Lemma \ref{ilem:3} gives $|r_{1,\ep}|\leq C(W)\sqrt{\ep}$, where $C$ is independent of $x,t,\omega$.
With the notation of~\eqref{i:33}, it holds that $r_{2,\ep}=T_2$. We use a Taylor expansion and bound
\begin{align*}
|r_{2,\ep}| & \leq \frac{1}{N^2}\sum_{i,j=1}^{N}{\left|W'(q_i(t)-q_j(t))-W'(x-q_j(t))\right|w_{\ep}(x-q_i(t))}\\
& \leq \frac{C(W)}{N^2}\sum_{i,j=1}^{N}{\left|x-q_i(t)\right|w_{\ep}(x-q_i(t))}=\frac{C(W)}{N}\sum_{i=1}^{N}{\left|x-q_i(t)\right|w_{\ep}(x-q_i(t))}\\
& =  \frac{C(W)}{N}\sum_{i=1}^{N}{\left|x-\ov{q}_i(t)\right|w_{\ep}(x-\ov{q}_i(t))} \\
& \quad + \frac{C(W)}{N}\sum_{i=1}^{N}{\left\{\left|x-q_i(t)\right|w_{\ep}(x-q_i(t))-\left|x-\ov{q}_i(t)\right|w_{\ep}(x-\ov{q}_i(t))\right\}} =: T_{3}+T_4.
\end{align*} 
Since the particles are identically distributed, we have
\begin{align*}
\mean{|T_3|} & =C(W)\mean{\left|x-q_1(t)\right|w_{\ep}(x-q_1(t))} \\
& = C(W)\int_{\mathbb{T}}{\left|y-x\right|w_{\ep}(x-y)f_{\ov{q}}(t,y)\m y} \leq C(W,f_0)\sqrt{\ep},
\end{align*} 
where $f_{\ov{q}}(t,\cdot)$ is the probability density function of $\ov{q}(t)$, and $f_0$ is as in Proposition \ref{ip:2}. The last inequality above is given by Lemma \ref{ilem:3}: in particular, the constant $C$ does not depend on time, as $\sup_{t\geq 0,q\in\mathbb{T}}{\frac{\partial}{\partial q}f_{\ov{q}}(t,q)}$ is finite. To see this, one may apply~\citep[(17.2)]{Villani2009b} and~\citep[Theorem 4.12]{Adams2003a}, with analogous considerations to those made in the proof of Proposition \ref{ip:2}.

 As for $T_4$, we use a Taylor expansion, the bounds $\max_{x\in\mathbb{T}}{w_{\ep}(x)}\leq C\ep^{-1}$ and $\max_{x\in\mathbb{T}}{|w'_{\ep}(x)|}\leq C\ep^{-2}$, and write
\begin{align*}
\mean{|T_4|} & =  C(W)\mean{\left|x-q_1(t)\right|w_{\ep}(x-q_1(t))-\left|x-\ov{q}_1(t)\right|w_{\ep}(x-\ov{q}_1(t))}\\
& \leq C(W)\mean{\left|x-q_1(t)\right|\cdot\left|w_{\ep}(x-q_1(t))-w_{\ep}(x-\ov{q}_1(t))\right|}\\
& \quad + C(W)\mean{|q_1(t)-\ov{q}_1(t)|w_{\ep}(x-\ov{q}_1(t))}\\
& \leq C(W)\ep^{-2}\mean{\left|x-q_1(t)\right|\cdot\left|q_1(t)-\ov{q}_1(t)\right|}\\
& \quad + C(W)\ep^{-1}\mean{|q_1(t)-\ov{q}_1(t)|}\leq C(W)\ep^{\beta},
\end{align*} 
for some $\beta=\beta(\theta)>0$, where the last inequality follows from the propagation of chaos (Proposition \ref{ip:1}), and the scaling $N\ep^{\theta}=1$. The bound for $r_{2,\ep}$ is established, and the proof is complete.
\end{proof}

\subsection{Noise comparison}\label{ss:3.4}

We want to replace the stochastic noise of~\eqref{i:20b} (previously referred to as $\mathcal{Z}_N$) with a noise closed in $\rho_{\ep}$ and $j_{\ep}$. We suitably adapt~\cite[Subsections 3.2 and 3.3]{Cornalba2018aTR}. 

We first recall a useful fact. Let $\gamma_{\ep}$ be the probability density function of a Gaussian random variable with mean zero and variance $\ep^2$. It is not difficult to show that, for $r_{\ep}:=w_{\ep}-\gamma_{\ep}$, it holds that \begin{equation}\label{i:111}
\|r_\ep\|_{C^0(-\pi;\pi)}\leq {\ep}^{\alpha},\mbox{ for some }\alpha\in(0,1).
\end{equation} 

\begin{prop}\label{ip:11}
Let the assumptions of Propositions \ref{ip:1} and \ref{ip:2} be satisfied. Assume the scaling $N\ep^{\theta}=1$, for $\theta$ large enough. We define the stochastic noise $$\mathcal{\dot{Y}}_N:=\sigma N^{-1/2}\sqrt{\rho_{\ep/\sqrt{2}}}\,Q^{1/2}_{\sqrt{2}\ep}\xi,$$ where $\xi$ is space-time white noise and $Q_{\sqrt{2}\ep}\colon L^2\rightarrow L^2$ is the convolution operator with kernel $w_{\sqrt{2}\ep}$ (i.e., $\tilde{\xi}_\ep:=Q^{1/2}_{\sqrt{2}\ep}\xi$ is an $H^1$-valued Q-Wiener process with covariance operator $Q_{\sqrt{2}\ep}$). For some positive $C=C(T)$, $c_1(\theta)$, and $c_2(\theta)$, and $\alpha$ as in~\eqref{i:111}, we have 
\begin{align*}
& \left|\mean{\mathcal{Z}_N(x_1,t)\mathcal{Z}_N(x_2,t)}-\mean{\mathcal{Y}_N(x_1,t)\mathcal{Y}_N(x_2,t)}\right| \\
& \quad \leq\frac{C\sigma^2}{N}w_{\sqrt{2}\ep}(x_1-x_2)\times\left\{|x_1-x_2|+\ep^{c_1(\theta)}+\ep^{\alpha}+\ep^{c_2(\theta)}|x_1-x_2|^{1/2}\right\}+ \frac{C\sigma^2}{N}\ep^{\alpha}.
\end{align*}
\end{prop}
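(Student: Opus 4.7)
My plan is to match the two covariance structures via It\^o's isometry and a Gaussian product identity, then to quantify the residual gap using the $\ep$-uniform regularity obtained in Section~\ref{s:2}. Applying It\^o's isometry and particle exchangeability,
$$
\mathbb{E}[\mathcal{Z}_N(x_1,t)\mathcal{Z}_N(x_2,t)] = \tfrac{\sigma^2}{N}\int_0^t \mathbb{E}[w_\ep(x_1-q_1(s))w_\ep(x_2-q_1(s))]\,\mathrm{d}s,
$$
while, because $\tilde\xi_\ep$ has covariance kernel $w_{\sqrt{2}\ep}(x_1-x_2)$, a parallel computation gives
$$
\mathbb{E}[\mathcal{Y}_N(x_1,t)\mathcal{Y}_N(x_2,t)] = \tfrac{\sigma^2}{N}\,w_{\sqrt{2}\ep}(x_1-x_2)\int_0^t \mathbb{E}\bigl[\sqrt{\rho_{\ep/\sqrt{2}}(x_1,s)\rho_{\ep/\sqrt{2}}(x_2,s)}\bigr]\,\mathrm{d}s.
$$
Thus it suffices to bound the integrand gap uniformly in $s\in[0,T]$.

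Setting $m:=(x_1+x_2)/2$, I would next exploit the Gaussian product identity
$$
\gamma_\ep(x_1-q)\gamma_\ep(x_2-q) = \gamma_{\sqrt{2}\ep}(x_1-x_2)\,\gamma_{\ep/\sqrt{2}}(q-m),
$$
and convert between $w$ and $\gamma$ via~\eqref{i:111}. Expanding $ab-cd=(a-c)b+c(b-d)$ produces remainders involving $r_\ep(x_j-q)$; these are controlled in expectation by $\|r_\ep\|_\infty\,\mathbb{E}[w_\ep(x_j-q_1(s))]\leq C\ep^\alpha$, the uniform bound on the density of $q_1(s)$ being supplied by Proposition~\ref{ip:2}. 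Together with the exchangeability identity $\mathbb{E}[w_{\ep/\sqrt{2}}(q_1(s)-m)]=\mathbb{E}[\rho_{\ep/\sqrt{2}}(m,s)]$, this rewrites the $\mathcal{Z}_N$-integrand as $w_{\sqrt{2}\ep}(x_1-x_2)\,\mathbb{E}[\rho_{\ep/\sqrt{2}}(m,s)]$ up to an $O\bigl(w_{\sqrt{2}\ep}(x_1-x_2)\,\ep^\alpha\bigr)$ error and a separate $O(\ep^\alpha)$ error (the latter unweighted because one of the $r_\ep$ factors is not accompanied by $w_{\sqrt{2}\ep}$). This accounts for the $\ep^\alpha$-contribution inside the braces and for the final separate $\frac{C\sigma^2}{N}\ep^\alpha$ term.

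It then remains to estimate $w_{\sqrt{2}\ep}(x_1-x_2)\bigl|\mathbb{E}[\rho_{\ep/\sqrt{2}}(m,s)]-\mathbb{E}[\sqrt{\rho_{\ep/\sqrt{2}}(x_1,s)\rho_{\ep/\sqrt{2}}(x_2,s)}]\bigr|$. Writing $\overline\rho(x,s):=\mathbb{E}[\rho_{\ep/\sqrt{2}}(x,s)]=w_{\ep/\sqrt{2}}\ast f_s$, I would split this into two pieces: \emph{(a)~a spatial-smoothness error} $|\sqrt{\overline\rho(x_1,s)\overline\rho(x_2,s)}-\overline\rho(m,s)|=O(|x_1-x_2|)$ coming from a Taylor expansion of the deterministic map $x\mapsto\sqrt{\overline\rho(x_1)\overline\rho(x_2)}$ around $m$, using the $\ep$-uniform Lipschitz bound on $\overline\rho$ that follows from Proposition~\ref{ip:2}; and \emph{(b)~a fluctuation error} $\bigl|\mathbb{E}[\sqrt{\rho_{\ep/\sqrt{2}}(x_1,s)\rho_{\ep/\sqrt{2}}(x_2,s)}]-\sqrt{\overline\rho(x_1,s)\overline\rho(x_2,s)}\bigr|$. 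For (b) I would apply the elementary inequality $|\sqrt{a}-\sqrt{b}|\leq\sqrt{|a-b|}$, bound the resulting $\sqrt{|\rho_{\ep/\sqrt{2}}(x_j,s)-\overline\rho(x_j,s)|}$ in expectation via Jensen's inequality, and control the inner $L^1$-fluctuation using the propagation of chaos estimate of Proposition~\ref{ip:1} together with the $L^c$-moment bounds of Proposition~\ref{ip:4}. Under the scaling $N\ep^\theta=1$, the $N^{-1/2}$-size fluctuation of $\rho_{\ep/\sqrt{2}}$ yields, after the square root, an $\ep^{c_1(\theta)}$ contribution; coupling this with the $1/2$-H\"older interpolation $|\overline\rho(x_1,s)-\overline\rho(x_2,s)|^{1/2}\leq C|x_1-x_2|^{1/2}$ produces the $\ep^{c_2(\theta)}|x_1-x_2|^{1/2}$ term. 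All bounds are uniform in $s\in[0,T]$, so a final time integration yields the stated estimate with $C=C(T)$.

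\emph{Main obstacle.} The delicate step is (b): since $\rho_{\ep/\sqrt{2}}$ admits no $\ep$-uniform pointwise lower bound and the square root is only $1/2$-H\"older near zero, I cannot linearise around $\overline\rho$ and must instead rely on the crude $|\sqrt{a}-\sqrt{b}|\leq\sqrt{|a-b|}$ balanced against the moment/propagation-of-chaos inputs. This H\"older loss is precisely what forces the exponent $1/2$ on $|x_1-x_2|$ in the last term of the bound, and is what dictates the explicit $\ep$-exponents $c_1(\theta)$, $c_2(\theta)$ through the scaling $N\ep^\theta=1$.
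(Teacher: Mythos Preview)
Your reduction to the ``main term'' is essentially the paper's: It\^o isometry, the Gaussian product identity $\gamma_\ep(x_1-q)\gamma_\ep(x_2-q)=\gamma_{\sqrt{2}\ep}(x_1-x_2)\gamma_{\ep/\sqrt{2}}(q-m)$, and the $w\leftrightarrow\gamma$ switching via~\eqref{i:111} to produce the $\ep^\alpha$ terms all match. The divergence is in how you bound
\[
\Bigl|\mathbb{E}\bigl[\rho_{\ep/\sqrt{2}}(m)\bigr]-\mathbb{E}\bigl[\sqrt{\rho_{\ep/\sqrt{2}}(x_1)\rho_{\ep/\sqrt{2}}(x_2)}\bigr]\Bigr|.
\]
The paper does \emph{not} split into a deterministic mean $\overline\rho=\mathbb{E}[\rho_{\ep/\sqrt{2}}]$ plus a fluctuation. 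Instead it works pathwise, writing $\rho(x_1)\rho(x_2)=\rho(m)^2+b(x_1,x_2)$ with $b$ an explicit second-order expression in $\rho$, uses $|\rho(m)-\sqrt{\rho(m)^2+b}|\le\sqrt{|b|}$, and bounds $\mathbb{E}[\sqrt{|b|}]$ via H\"older as a sum $T_1T_2+T_3T_4$. Each $T_i$ is then transferred, by propagation of chaos, to the \emph{random} auxiliary density $\bar\rho_{\ep}$ built from the independent particles~\eqref{i:2}, where the estimates of~\cite{Cornalba2018aTR} apply directly. The cross term $\ep^{c_2(\theta)}|x_1-x_2|^{1/2}$ arises organically from the product $T_3T_4$, each factor carrying both a spatial and a propagation-of-chaos contribution.

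Your mean/fluctuation split is a legitimate alternative and would, if carried through, give a bound of the form $|x_1-x_2|+\ep^{c(\theta)}$ (the cross term being dominated by these, so your attempt to manufacture it via ``$1/2$-H\"older interpolation'' is unnecessary). Two points need tightening, though. First, you invoke Proposition~\ref{ip:2} for the Lipschitz/$C^2$ regularity of $\overline\rho(x)=\mathbb{E}[w_{\ep/\sqrt{2}}(x-q_1)]$, but that proposition concerns the \emph{auxiliary} system; you must insert a propagation-of-chaos step to pass from $q_1$ to $\bar q_1$ before the Villani-type bounds become available (this is exactly why the paper routes everything through $\bar\rho_\ep$). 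Second, in (b) the variance $\mathbb{E}[(\rho-\overline\rho)^2]$ contains off-diagonal covariances $\mathrm{Cov}[w(x-q_1),w(x-q_2)]$ which do not vanish for interacting particles; these again require propagation of chaos, with an accompanying $\ep^{-2}$ from $\|w'_\ep\|_\infty$, so the bookkeeping is a little more involved than the clean ``$N^{-1/2}$-size fluctuation'' you sketch. None of this is fatal, but the paper's route---staying pathwise and comparing to the random $\bar\rho_\ep$ rather than to the deterministic $\overline\rho$---sidesteps both issues in one stroke.
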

This result is an adaptation of~\cite[Proof of Theorem 1.3]{Cornalba2018aTR}. We sketch the proof below, and defer more technical considerations to Remark \ref{rem:100}.
\begin{proof}[Proof of Proposition \ref{ip:11}]
In what follows, the residuals $r_{\ep}$ in~\eqref{i:111} appear several times. We do not specify the argument, as ultimately only their $C^0$-norms will play a role. Set $m:=(x_1+x_2)/2$. We use the multiplication rule for Gaussian kernels~\cite[Lemma A.4]{Cornalba2018aTR}, the independence of the Brownian noises, and we apply~\eqref{i:111} several times to obtain 
\begin{align*}
  &\mean{\mathcal{Z}_{N}(x_1,t)\mathcal{Z}_{N}(x_2,t)} \\
  &\quad = \mean{\left(\int_{0}^{t}{\frac{\sigma}{N}\sum_{i=1}^{N}{w_{\ep}(x_1-q_i(u))
    \m \beta_i(u)}}\right)\left(\int_{0}^{t}{\frac{\sigma}{N}\sum_{i=1}^{N}{w_{\ep}(x_2-q_i(u))\m \beta_i(u)}}\right)}\nonumber\\
  &\quad = \frac{\sigma^2}{N^2}\mean{\sum_{i=1}^{N}{\int_{0}^{t}{w_{\ep}(x_1-q_i(u))w_{\ep}(x_2-q_i(u))\m u}}}\\
  &\quad = \frac{\sigma^2}{N^2}\mean{\sum_{i=1}^{N}{\int_{0}^{t}{\left(\gamma_{\ep}(x_1-q_i(u))+r_{\ep}\right)\left(\gamma_{\ep}(x_2-q_i(u))+r_{\ep}\right)\m u}}}\\
  &\quad = \frac{\sigma^2}{N^2}\mean{\sum_{i=1}^{N}{\int_{0}^{t}{\gamma_{\sqrt{2}\ep}(x_1-x_2)\gamma_{\ep/\sqrt{2}}(m-q_i(u))\m u}}}\\
  & \quad \quad + \frac{\sigma^2}{N^2}\mean{\sum_{i=1}^{N}{\int_{0}^{t}{\left\{r_{\ep}^2+r_{\ep}\gamma_{\ep}(x_1-q_i(u))+r_{\ep}\gamma_{\ep}(x_2-q_i(u))\right\}\m u}}}.
\end{align*}
We use~\eqref{i:111} to switch back to the von Mises kernels, and use the definition of $\rho_{\ep/\sqrt{2}}$ to obtain
\begin{align*}
  &\left|\mean{\mathcal{Z}_{N}(x_1,t)\mathcal{Z}_{N}(x_2,t)} - \mean{\mathcal{Y}_N(x_1,t)\mathcal{Y}_N(x_2,t)}\right| \\
     & \quad \leq \left|\frac{\sigma^2}{N}w_{\sqrt{2}\ep}(x_1-x_2)\int_{0}^{t}{\mean{\rho_{\ep/\sqrt{2}}(m,u)}\m u} \right. \\ 
     & \quad \quad \left. - \frac{\sigma^2}{N}w_{\sqrt{2}\ep}(x_1-x_2)\int_{0}^{t}{\mean{\sqrt{\rho_{\ep/\sqrt{2}}(x_1,u)\rho_{\ep/\sqrt{2}}(x_2,u)}}\m u} \right|
  \\
  & \quad \quad + \left|\frac{\sigma^2}{N^2}\mean{\sum_{i=1}^{N}{\int_{0}^{t}{\left\{3r_\ep^2+r_{\ep/\sqrt{2}}r_{\sqrt{2}\ep}\right\}\m u}}} + \frac{\sigma^2}{N^2}\mean{\sum_{i=1}^{N}{\int_{0}^{t}{\left\{r_{\ep/\sqrt{2}}w_{\sqrt{2}\ep}(x_1-x_2)\right\}\m u}}}\right.\\
  & \quad \quad + \left.\frac{\sigma^2}{N^2}\mean{\sum_{i=1}^{N}{\int_{0}^{t}{\left\{r_{\ep}w_{\ep}(x_1-q_i(u))+r_{\ep}w_{\ep}(x_2-q_i(u))+r_{\sqrt{2}\ep}w_{\ep/\sqrt{2}}(m-q_i(u))\right\}\m u}}}\right|\\
  & \quad =: |A_1-A_2|+|A_3+A_4+A_5|.
\end{align*}
The bound $|A_3+A_4+A_5|\leq (C\sigma^2/N)\{\ep^\alpha+\ep^{\alpha}w_{\sqrt{2}\ep}(x_1-x_2)\}$ follows easily from~\eqref{i:111}. In order to control $|A_1-A_2|$, it is sufficient to bound 
\begin{align}\label{i:1003}
\mean{\left|\rho_{\ep/\sqrt{2}}(m)-\sqrt{\rho_{\ep/\sqrt{2}}(x_1)\rho_{\ep/\sqrt{2}}(x_2)}\right|}, 
\end{align}
where we have fixed $u\in[0,T]$, and dropped the time dependence for notational convenience. We bound the random variable in~\eqref{i:1003} as
\begin{align}\label{i:1004}
\left|\rho_{\ep/\sqrt{2}}(m)-\sqrt{\rho^2_{\ep/\sqrt{2}}(m)+b(x_1,x_2)}\right|\leq \sqrt{\left|b(x_1,x_2)\right|},
\end{align}
where
\begin{align*}
b(x_1,x_2) & := \rho_{\ep/\sqrt{2}}(m)\!\left[\rho_{\ep/\sqrt{2}}(x_1)+\rho_{\ep/\sqrt{2}}(x_2)-2\rho_{\ep/\sqrt{2}}(m)\right]\\ 
& \quad + (\rho_{\ep/\sqrt{2}}(x_1)-\rho_{\ep/\sqrt{2}}(m))(\rho_{\ep/\sqrt{2}}(x_2)-\rho_{\ep/\sqrt{2}}(m)).
\end{align*}
The H\"older inequality implies that $\mean{\sqrt{\left|b(x_1,x_2)\right|}}$ is bounded by 
\begin{align*}
& \mean{\rho^2_{\ep/\sqrt{2}}(m)}^{1/4}\mean{\left|\rho_{\ep/\sqrt{2}}(x_1)+\rho_{\ep/\sqrt{2}}(x_2)-2\rho_{\ep/\sqrt{2}}(m)\right|^2}^{1/4}\\
& \quad + \mean{\left|\rho_{\ep/\sqrt{2}}(x_1)-\rho_{\ep/\sqrt{2}}(m)\right|^4}^{1/8}\mean{\left|\rho_{\ep/\sqrt{2}}(x_2)-\rho_{\ep/\sqrt{2}}(m)\right|^4}^{1/8}=:T_1T_2+T_3T_4.
\end{align*}
We notice that
\begin{align}\label{i:2002}
& \mean{\left|\frac{1}{N}\sum_{i=1}{w_{\ep}(x-\ov{q}_i(t))}\right|^{c}} \nonumber\\
& \quad = N^{-c}\sum_{\mathbf{j}\in\mathcal{S}_{1,c}}{\mean{\prod_{k=1}^{c}{w_{\ep}(x-\ov{q}_{j_k}(t))}}} + N^{-c}\sum_{\mathbf{j}\in\mathcal{S}_{2,c}}{\mean{\prod_{k=1}^{c}{w_{\ep}(x-\ov{q}_{j_k}(t))}}}\nonumber\\
& \quad \leq \mean{w_{\ep}(x-\ov{q}_1(t))}^c+N^{-1}\ep^{-c} \leq \|w_{\ep}(x-\cdot)\|_{L^1}^c\|f_{\ov{q}}(t,\cdot)\|^c_{L^\infty}+N^{-1}\ep^{-c}\nonumber\\
& \quad = \|f_{\ov{q}}(t,\cdot)\|^c_{L^\infty}+N^{-1}\ep^{-c},
\end{align}
where $f_{\ov{q}}(t,\cdot)$ is the probability density function of $\ov{q}(t)$, and $f_0$ is as in Proposition \ref{ip:2}. As $\theta$ is large enough, and taking into account $\sup_{t\geq0}\|f_{\ov{q}}(t,\cdot)\|^c_{L^\infty}<\infty$ (implied by assumptions of Proposition \ref{ip:2} thanks to~\citep[(17.2)]{Villani2009b}) we see that the left-hand side of~\eqref{i:2002} is uniformly bounded in $\ep,x$, and $t$. We may now bound $T_1,\dots,T_4$. We write
\begin{align*}
T_1 \leq K\mean{\ov{\rho}^2_{\ep/\sqrt{2}}(m)}^{1/4}+K\mean{\left|\rho_{\ep/\sqrt{2}}(m)-\ov{\rho}_{\ep/\sqrt{2}}(m)\right|^2}^{1/4},
\end{align*}
where $\ov{\rho}_{\ep}$ is the smoothed density with respect to the particle system~\eqref{i:2}. The first term in the right-hand side above is bounded by~\eqref{i:2002}, while the second is bounded using the propagation of chaos. As a result, $T_1\leq C$.

As for $T_2$, again by adding and subtracting relevant evaluations of $\ov{\rho}_{\ep}$, we obtain
\begin{align}\label{i:2003}
T_2 & \leq K\mean{\left|\ov{\rho}_{\ep/\sqrt{2}}(x_1)+\ov{\rho}_{\ep/\sqrt{2}}(x_2)-2\ov{\rho}_{\ep/\sqrt{2}}(m)\right|^2}^{1/4}+K\mean{\left|\rho_{\ep/\sqrt{2}}(x_1)-\ov{\rho}_{\ep/\sqrt{2}}(x_1)\right|^2}^{1/4}\nonumber\\
& \quad +K\mean{\left|\rho_{\ep/\sqrt{2}}(x_2)-\ov{\rho}_{\ep/\sqrt{2}}(x_2)\right|^2}^{1/4} +K\mean{\left|\rho_{\ep/\sqrt{2}}(m)-\ov{\rho}_{\ep/\sqrt{2}}(m)\right|^2}^{1/4}.
\end{align}
The first term in the right-hand side of~\eqref{i:2003} can bounded by $K|x_1-x_2|$, using the same strategy used in~\cite[Adaptation of proof of Theorem 1.3]{Cornalba2018aTR}; the remaining ones are controlled using the propagation of chaos. As a result, we get $T_2\leq K|x_1-x_2|+\ep^{\gamma_1}$, for some $\gamma_1=\gamma_1(\theta)>0$.
The analysis of $T_3,T_4$ is similar to that of $T_2$. In the case of $T_3$ 
\begin{align}\label{i:2004}
T_3 & \leq K\mean{\left|\ov{\rho}_{\ep/\sqrt{2}}(x_1)-\ov{\rho}_{\ep/\sqrt{2}}(m)\right|^4}^{1/8}+K\mean{\left|\rho_{\ep/\sqrt{2}}(x_1)-\ov{\rho}_{\ep/\sqrt{2}}(x_1)\right|^2}^{1/4}\nonumber\\
& \quad +K\mean{\left|\rho_{\ep/\sqrt{2}}(m)-\ov{\rho}_{\ep/\sqrt{2}}(m)\right|^2}^{1/4}.
\end{align}
The first term in the right-hand side of~\eqref{i:2004} can bounded by $K\sqrt{|x_1-x_2|}$, using the same strategy used in~\cite[Adaptation of proof of Theorem 1.3]{Cornalba2018aTR}; propagation of chaos controls the remaining ones. So $T_3\leq K\sqrt{|x_1-x_2|}+\ep^{\gamma_2}$, for some $\gamma_2=\gamma_2(\theta)>0$. The estimate for $T_4$ is the same, with the couple $(x_1,m)$ replaced by $(x_2,m)$.

Putting everything together, we obtain the bound 
\begin{align}\label{i:2005}
\mean{\left|\rho_{\ep/\sqrt{2}}(m)-\sqrt{\rho^2_{\ep/\sqrt{2}}(m)-b(x_1,x_2)}\right|}\leq C\left\{|x_1-x_2|+\ep^{c_1(\theta)}+\ep^{c_2(\theta)}|x_1-x_2|^{1/2}\right\},
\end{align}
where $c_1(\theta):=\min\{\gamma_1;2\gamma_2\}$ and $c_2(\theta):=\gamma_2$. This concludes the proof.
\end{proof}
\begin{rem}\label{rem:100}
The error bound of Proposition \ref{ip:11} is less sharp than the one provided in~\cite[Theorem~1.3]{Cornalba2018aTR} in the following sense: firstly, the spatial term contributions in~\eqref{i:2005} are not quadratic. This is due to the use of the suboptimal bound~\eqref{i:1004}, as clarified in~\cite[Remark 3.4]{Cornalba2018aTR}. More precisely, we do not have an analogue of~\cite[Proposition B.8]{Cornalba2018aTR} in the case of weakly interacting particles, so we can not use more precise bounds involving inverse powers of $\rho_{\ep}$; secondly, the propagation of chaos produces stand-alone contributions in $\ep$ (vanishing as $\ep\rightarrow 0$); finally, the need to switch from von Mises to Gaussian kernels (and vice versa) produces additional contributions in $\ep$ (also vanishing as $\ep\rightarrow 0$).
\end{rem}

\section{The regularised model}\label{s:4}

While the equations~\eqref{i:20a}--\eqref{i:20b} describe the `exact' evolution of the relevant densities $(\rho_\ep,j_{\ep})$ associated to the weakly interacting particle system~\eqref{i:1}, they are not, however, closable in $(\rho_{\ep},j_{\ep})$: more precisely, they contain three terms (specifically, $j_{2,\ep}$, $\mathcal{Z}_N$, and the nonlocal interaction term of~\eqref{i:20b}) which can not be related directly to $(\rho_{\ep},j_{\ep})$. In this final section, under suitable assumptions, we derive and analyse an SPDE 
which approximates~\eqref{i:20a}--\eqref{i:20b}. We propose the following approximations associated with the three terms mentioned above, and we point out the extent to which they are valid.

\emph{Approximation 1}. The interaction term in~\eqref{i:20b} is replaced by $\{W'\ast \rho_\ep\}\rho_{\ep}$. Proposition \ref{ip:6} implies that this replacement gives a vanishing error (in the $L^1$ sense) as $\ep\rightarrow 0$.

\emph{Approximation 2}.  We replace $j_{2,\ep}$ with $\frac{\sigma^2}{2\gamma}\frac{\partial\rho_{\ep}}{\partial x}$. This has been done also in~\cite{Cornalba2018aTR}, and we adapt the essential details here. In local equilibrium, the probability density function of the couple $(q_i(t),p_i(t))$ is approximately separable in the two variables (as shown in~\cite[Corollary 3.2]{Duong2018a}). We can thus write $\mean{j_{2,\ep}}=\mean{p_1^2(t)}\mean{\partial \rho_{\ep}/\partial x}$, which suggests the proposed replacement. In a small temperature regime (corresponding to $\sigma^2/(2\gamma)\ll 1$), we see that $\mbox{Var}[p^2_i(t)]\leq C\sigma^4/(2\gamma)^2\ll \sigma^2/(2\gamma)\approx \mean{p^2_i(t)}$, see again~\cite[Corollary 3.2]{Duong2018a}. It is in this case sensible to replace $p^2_i$ with $\mean{p^2_i}$, which means replacing $j_{2,\ep}$ with $\frac{\sigma^2}{2\gamma}\frac{\partial\rho_{\ep}}{\partial x}$.

\emph{Approximation 3}. We replace $\mathcal{Z}_N$ with $\sigma N^{-1/2}\sqrt{\rho_{\ep}}\,\tilde{\xi_{\ep}}$. This is justified along the lines of~\cite{Cornalba2018aTR}, and we adapt the necessary details. First, we notice that $\mathcal{Z}_N$ and $\mathcal{Y}_N$ are asymptotically equivalent in distribution for $\ep\rightarrow 0$, as shown in Proposition \ref{ip:11}. In addition, one can show that, for each $t\in[0,T]$, $\{\rho_{\ep}(\cdot,t)\}_{\ep}$ has a unique limit in $L^2$ as $\ep\rightarrow 0$. This can be seen be taking two sequences $\{a_n;N^a_n\}$, $\{b_n;N^b_n\}$ (both satisfying the usual $\theta$-scaling) and using scaling arguments (similar to those used, for example, in~\eqref{i:2002}) and the propagation of chaos to show that $\mean{\|\rho_{a_n}(\cdot,t)-\rho_{b_n}(\cdot,t)\|^2_{L^2}}\rightarrow 0$ as $a_n, b_n\rightarrow 0$. As a result, the two quantities $\rho_{\ep}(\cdot,t)$ and $\rho_{\ep/\sqrt{2}}(\cdot,t)$ coincide in the limit. Therefore, for $\ep\ll 1$, we consider $\sigma N^{-1/2}\sqrt{\rho_{\ep}}\,\tilde{\xi_{\ep}}$ in spite of  $\mathcal{Y}_N$, thus obtaining the overall noise replacement.

These approximations give the following \emph{regularised Dean--Kawasaki model} for interacting particles in undamped regime
\begin{subequations}
\label{i:300}
\begin{empheq}[left={}\empheqlbrace]{align}
  \,\,\displaystyle\frac{\partial \tilde{\rho}_{\ep}}{\partial t}(x,t) & = -\frac{\partial \tilde{j}_{\ep}}{\partial x}(x,t),\label{i:300a}\\
  \,\,\displaystyle\frac{\partial  \tilde{j}_{\ep}}{\partial t}(x,t) & = 
  -\gamma  \tilde{j}_{\ep}(x,t)-\left(\frac{\sigma^2}{2\gamma}\right)\frac{\partial  \tilde{\rho}_{\ep}}{\partial x}(x,t)-\{W'\ast \tilde{\rho}_{\ep}(\cdot,t)\} \tilde{\rho}_{\ep}(\cdot,t)
  +\frac{\sigma}{\sqrt{N}}\sqrt{ \tilde{\rho}_{\ep}(x,t)}\,\tilde{\xi}_{\ep}, \label{i:300b}\\
  \,\, \displaystyle \tilde{\rho}_{\ep}(x,0) & = \rho_0(x),\quad  \tilde{j}_{\ep}(x,0)= j_0(x)\nonumber,
  \end{empheq}
\end{subequations}
for $ (x,t)\in\mathbb{T}\times[0,T]$, and where $(\rho_0,j_0)$ is a suitable initial datum. We used the notation $(\tilde{\rho}_{\ep},\tilde{j}_{\ep})$ to distinguish the solution of the SPDE~\eqref{i:300} from the smoothed (exact) densities $(\rho_{\ep},j_{\ep})$. We establish a high-probability existence and uniqueness result (in the sense of mild solutions) for~\eqref{i:300}. 
Following~\cite[Subsection 4.3]{Cornalba2018aTR}, we smooth the coefficient function of the noise in~\eqref{i:300b} and study the system
\begin{equation}
  \label{i:301}
  \left\{
    \begin{array}{l}
      \m X_{\ep}(t)=\left[AX_{\ep}(t)+\alpha(X_{\ep}(t))\right]\m t+B_{N,\delta}(X_{\ep}(t))\m W_{\ep}, \\
      X_{\ep}(0)=X_0,
    \end{array}
  \right.
\end{equation}
for $X_{\ep}(t):=(\tilde{\rho}_{\ep}(\cdot,t),\tilde{j}_{\ep}(\cdot,t))$, $X_0:=(\rho_0,j_0)$, $\dot{W}_{\ep}:=(0,\tilde{\xi}_{\ep})$, and where $A$ (respectively, $\alpha$) is a linear (respectively, nonlinear) operator on $\mathcal{W}:=H^1(\mathbb{T})\times H^1(\mathbb{T})$ defined by 
\begin{equation*}
  AX_{\ep}(t):=\left(-\frac{\partial \tilde{j}_{\ep}}{\partial x}(\cdot,t),\,-\gamma \tilde{j}_{\ep}(\cdot,t)-\left(\frac{\sigma^2}{2\gamma}\right)\frac{\partial \tilde{\rho}_{\ep}}{\partial x}(\cdot,t)\right),\qquad 
  \alpha (X_{\ep}(t)):=\left(0,-\{W'\ast\tilde{\rho}_{\ep}(\cdot,t)\}\tilde{\rho}_{\ep}(\cdot,t)\right),
\end{equation*}
 and $B_{N,\delta}\colon\mathcal{W}\rightarrow \{f\colon \mathcal{W}\rightarrow L^2\times L^2\}$ is defined as $B_{N}((\rho,j))(a,b):=\sigma N^{-1/2}\left(0,\, h_{\delta}(|\rho|)\cdot b\right)$, for $h_{\delta}$ being a $C^2(\mathbb{R})$-regularisation of the square-root function on $[-\delta,\delta]$, for some $\delta>0$. A mild solution to~\eqref{i:301} on $[0,T]$ is a $\mathcal{W}$-valued predictable process $X_{\ep,\delta}=(\tilde{\rho}_{\ep,\delta},\tilde{j}_{\ep,\delta})$ defined on $[0,T]$ such that $\mathbb{P}(\int_{0}^{T}{\|X_{\ep,\delta}(s)\|^2_{\mathcal{W}}\m s})=1$, and satisfying, for each $t\in[0,T]$
 \begin{align*}
 X_{\ep,\delta}(t)=S(t)X_0+\int_{0}^{t}{S(t-s)\alpha(X_{\ep,\delta}(s))\m s}+\int_{0}^{t}{S(t-s)B_{N,\delta}(X_{\ep,\delta}(s))\m W_{\ep}},\qquad\mathbb{P}\mbox{-a.s.}
 \end{align*}
 where $\{S(t)\}_{t\geq 0}$ is the $C_0$-semigroup generated by $A$ (see~\cite[Lemma 4.2]{Cornalba2018aTR}).

We first of all analyse the noise-free version of~\eqref{i:301}.
\begin{lemma}\label{il:7} Fix $0<c_1<c_2$. Consider the system
\begin{equation}
  \label{i:302}
  \left\{
    \begin{array}{l}
      \emph{\m} X(t)=\left[AX(t)+\alpha(X(t))\right]\emph{\m} t,\\
      X(0)=X_0:=(\rho_0,j_0),
    \end{array}
  \right.
\end{equation}
and assume that $\min_{x\in\mathbb{T}}{\rho_0(x)}> c_1$ and $\|X_0\|_{\mathcal{W}}< c_2$. Then~\eqref{i:302} has a unique local $\mathcal{W}$-valued mild solution $Z:=(\rho_Z,j_Z)$ up to some $T>0$, such that 
\begin{align}\label{i:303}
\min_{x\in\mathbb{T}}{\rho_Z(x,s)}> c_1\mbox{ and }\|Z(s)\|_{\mathcal{W}}< c_2,\quad\mbox{ for all }s\in[0,T].
\end{align}
\end{lemma}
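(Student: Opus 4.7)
The plan is to prove existence and uniqueness via a classical Banach fixed-point argument on the integral (mild) formulation
\begin{align*}
\Phi(Y)(t):=S(t)X_0+\int_{0}^{t}{S(t-s)\alpha(Y(s))\,\m s},
\end{align*}
carried out on a small closed ball in $C([0,T];\mathcal{W})$ around the curve $t\mapsto S(t)X_0$. Since $\{S(t)\}_{t\geq 0}$ is a $C_0$-semigroup on $\mathcal{W}$ by~\cite[Lemma 4.2]{Cornalba2018aTR}, there exist $M\geq 1$ and $\omega\in\mathbb{R}$ with $\|S(t)\|_{\mathcal{W}\to\mathcal{W}}\leq Me^{\omega t}$. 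It therefore suffices to show that $\alpha$ is locally Lipschitz on $\mathcal{W}$; standard semigroup theory then yields a unique mild solution on $[0,T]$ for $T$ small enough depending on the local Lipschitz constant and the radius of the ball.

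The core estimate needed for local Lipschitz continuity of $\alpha$ relies on the fact that $H^1(\mathbb{T})$ is a Banach algebra in one space dimension, so that for $\rho_1,\rho_2\in H^1$,
\begin{align*}
\|\{W'\ast\rho_1\}\rho_1-\{W'\ast\rho_2\}\rho_2\|_{H^1}\leq C\|W'\|_{W^{1,1}(\mathbb{T})}\bigl(\|\rho_1\|_{H^1}+\|\rho_2\|_{H^1}\bigr)\|\rho_1-\rho_2\|_{H^1},
\end{align*}
which is finite since $W\in C^2(\mathbb{T})$. Together with the trivial fact that $\alpha$ maps into $\{0\}\times H^1\subset\mathcal{W}$, this gives the required local Lipschitz bound on every bounded subset of $\mathcal{W}$. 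Choosing the radius of the ball to be, say, $r:=(c_2-\|X_0\|_{\mathcal{W}})/2$ and $T$ small enough so that both $(M e^{|\omega|T}-1)\|X_0\|_{\mathcal{W}}+TMe^{|\omega|T}\sup_{\|Y\|_{\mathcal{W}}\leq c_2}\|\alpha(Y)\|_{\mathcal{W}}\leq r$ and $\Phi$ is a contraction on this ball, one gets the local mild solution $Z=(\rho_Z,j_Z)$.

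To verify that \eqref{i:303} holds on $[0,T]$ (shrinking $T$ further if necessary), observe that $Z\in C([0,T];\mathcal{W})$ and that the one-dimensional Sobolev embedding $H^1(\mathbb{T})\hookrightarrow C(\mathbb{T})$ implies both $\rho_Z(\cdot,s)\to\rho_0$ uniformly on $\mathbb{T}$ as $s\downarrow 0$, and $\|Z(s)\|_{\mathcal{W}}\to\|X_0\|_{\mathcal{W}}$. Hence the maps $s\mapsto \min_{x\in\mathbb{T}}\rho_Z(x,s)$ and $s\mapsto\|Z(s)\|_{\mathcal{W}}$ are continuous at $0$ with values strictly greater than $c_1$ and strictly less than $c_2$, respectively; by continuity the strict inequalities persist on some (possibly smaller) interval $[0,T']$, on which the bounds~\eqref{i:303} hold.

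The only real subtlety is making the two constraints compatible: the existence time from the fixed-point argument is controlled by the Lipschitz constant on a $\mathcal{W}$-ball of radius comparable to $c_2$, whereas the preservation of $\min_x\rho_Z(x,s)>c_1$ is controlled by $\min_x\rho_0(x)-c_1$ through the $H^1\hookrightarrow C$ embedding. Both shrink $T$ but neither interferes with the other, so one simply takes $T$ equal to the minimum of the three time thresholds. No essential new idea beyond~\cite[Section 4.3]{Cornalba2018aTR} is required; the superlinear term $\alpha$ is handled purely by localisation, which is the reason the statement is local in time.
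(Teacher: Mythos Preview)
Your proof is correct and follows essentially the same strategy as the paper: establish that $\alpha$ is locally Lipschitz on $\mathcal{W}$, invoke local existence theory for semilinear evolution equations driven by the $C_0$-semigroup $\{S(t)\}_{t\geq 0}$, and then use continuity of the mild solution together with the embedding $H^1(\mathbb{T})\hookrightarrow C(\mathbb{T})$ to shrink $T$ so that~\eqref{i:303} holds. The only cosmetic difference is that the paper outsources the existence step to~\cite[Theorem~4.5, Remark~4.6]{Tappe2012a} (and verifies local Lipschitzness by a direct term-by-term expansion rather than invoking the Banach-algebra property of $H^1$), whereas you unpack the underlying Banach fixed-point argument explicitly; the two routes are interchangeable here.
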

\begin{proof}
The operator $A$ generates a $C_0$-semigroup of contractions on $\mathcal{W}$, see for example~\cite[Lemma 4.2]{Cornalba2018aTR}. In addition, $\alpha$ is locally Lipschitz and locally bounded. To see this, choose $(u_1,v_1)$ and $(u_2,v_2)$ in a $\mathcal{W}$-ball of radius $n$. Then, using the Sobolev embedding $H^1\subset C^{0}$ and the boundedness of $W'$ and $W''$, we obtain
\begin{align}\label{i:304}
& \left\|\alpha((u_1,v_1))-\alpha((u_2,v_2))\right\|^2_{\mathcal{W}} \nonumber\\
& \quad = \left\|\{W'\ast u_1\}u_1-\{W'\ast u_2\}u_2\right\|_{L^2}^2+\left\|(\partial/\partial x)\left(\{W'\ast u_1\}u_1-\{W'\ast u_2\}u_2\right)\right\|_{L^2}^2\nonumber\\
& \quad \leq C\left\{\left\|\{W'\ast (u_1-u_2)\}u_1\right\|_{L^2}^2+\left\|\{W'\ast u_2\}(u_1-u_2)\right\|_{L^2}^2 + \left\|\{W''\ast (u_1-u_2)\}u_1\right\|_{L^2}^2 \right.\nonumber\\
& \quad \quad \left.+ \left\|\{W''\ast u_2\}(u_1-u_2)\right\|_{L^2}^2+ \left\|\{W'\ast (u_1-u_2)\}u'_1\right\|_{L^2}^2+\left\|\{W'\ast u_2\}(u'_1-u'_2)\right\|_{L^2}^2 \right\}\nonumber\\
& \quad \leq C(n,W)\left\|(u_1,v_1)-(u_2,v_2)\right\|^2_{\mathcal{W}},
\end{align}  
which is the local Lipschitz property for $\alpha$. Local boundedness is settled with an analogous computation. We apply~\cite[Theorem 4.5]{Tappe2012a} to deduce the existence of a unique local $\mathcal{W}$-valued mild solution $Z:=(\rho_Z,j_Z)$ to~\eqref{i:302} up to some $T>0$. Since the solution is c\`adl\`ag by~\cite[Remark 4.6]{Tappe2012a},  using the Sobolev embedding $H^1\subset C^{0}$, we can choose $T>0$ so that~\eqref{i:303} is satisfied.
\end{proof}
\begin{lemma} Let $X_0$ be a deterministic initial datum for~\eqref{i:301}. 
Then~\eqref{i:301} admits a unique local mild solution.
\end{lemma}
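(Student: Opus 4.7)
The plan is to apply the standard local existence and uniqueness theorem for mild solutions of SPDEs with locally Lipschitz coefficients, in direct parallel to the deterministic Lemma~\ref{il:7}. The three structural ingredients are (i) $A$ generates a $C_0$-semigroup of contractions on $\mathcal{W}=H^1(\mathbb{T})\times H^1(\mathbb{T})$, as recalled in~\cite[Lemma 4.2]{Cornalba2018aTR}; (ii) the drift $\alpha$ is locally Lipschitz and locally bounded, as already established in estimate~\eqref{i:304}; and (iii) the diffusion $B_{N,\delta}$ is locally Lipschitz and locally bounded from $\mathcal{W}$ into the Hilbert-Schmidt space $L_2(U_0,\mathcal{W})$, where $U_0:=Q_{\sqrt{2}\ep}^{1/2}(L^2)$ is the associated Hilbert space of the $Q_{\sqrt{2}\ep}$-Wiener process $\tilde{\xi}_\ep$. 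Only the third ingredient is new and needs explicit verification.

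For that verification I would exploit two facts. First, since $h_\delta\in C^2(\mathbb{R})$ regularises $\sqrt{\cdot}$, both $h_\delta$ and $h_\delta'$ are bounded on compact intervals; together with the chain rule and the Sobolev embedding $H^1(\mathbb{T})\hookrightarrow C^0(\mathbb{T})$, this yields local Lipschitz continuity of $\rho\mapsto h_\delta(|\rho|)$ from $H^1(\mathbb{T})$ into $H^1(\mathbb{T})$. Second, the covariance operator $Q_{\sqrt{2}\ep}$ is trace-class in $H^1$ thanks to the smoothness of the von Mises kernel $w_{\sqrt{2}\ep}$, so multiplication by $h_\delta(|\rho|)$ composed with $Q_{\sqrt{2}\ep}^{1/2}$ is Hilbert-Schmidt with norm controlled by $\|h_\delta(|\rho|)\|_{H^1}$ up to an $\ep$-dependent constant. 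Combining the two gives
\begin{align*}
\|B_{N,\delta}(X_1)-B_{N,\delta}(X_2)\|_{L_2(U_0,\mathcal{W})}\leq C(n,\delta,\ep)\|X_1-X_2\|_{\mathcal{W}}
\end{align*}
for $X_1,X_2$ in any $\mathcal{W}$-ball of radius $n$, together with an analogous local boundedness estimate.

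With all three ingredients in place, I would invoke the stochastic counterpart of~\cite[Theorem 4.5]{Tappe2012a} (classical local mild-solution theory for SPDEs with locally Lipschitz coefficients driven by a $Q$-Wiener process on a separable Hilbert space) to produce a unique predictable $\mathcal{W}$-valued process $X_{\ep,\delta}$ satisfying the mild formulation $\mathbb{P}$-a.s. up to an explosion stopping time $\tau>0$; the determinism of $X_0$ trivially provides the necessary $\mathcal{F}_0$-measurability. The main technical obstacle is precisely the $H^1$-level Hilbert-Schmidt bound for $B_{N,\delta}$: it forces differentiating the product $h_\delta(|\rho|)\cdot b$ and hence controlling $h_\delta'$, which is why the regularisation at scale $\delta$ (rather than the bare square root) is indispensable here. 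Once this estimate is secured, the remainder is a standard fixed-point contraction argument in an appropriate weighted Banach space of predictable processes, and no further obstruction arises.
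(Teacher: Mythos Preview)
Your proposal is correct and follows essentially the same route as the paper: both verify the hypotheses of~\cite[Theorem 4.5]{Tappe2012a} via (i) the contraction semigroup generated by $A$, (ii) the local Lipschitz/boundedness of $\alpha$ from~\eqref{i:304}, and (iii) suitable mapping properties of $B_{N,\delta}$ together with the trace-class nature of $Q_{\sqrt{2}\ep}$. The only cosmetic difference is that the paper dispatches (iii) by citing~\cite[Lemma 4.5]{Cornalba2018aTR} (where $B_{N,\delta}$ is shown to be locally Lipschitz with linear growth) and~\cite[Subsection 4.2]{Cornalba2018aTR} for the eigenvalue decay of $Q_{\sqrt{2}\ep}$, whereas you sketch the verification directly; your local-boundedness formulation is slightly weaker than the paper's linear-growth statement, but either suffices for local existence.
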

\begin{proof}
This follows from~\cite[Theorem 4.5]{Tappe2012a}, since (i) $A$ generates a $C_0$-semigroup of contractions on $\mathcal{W}$; (ii) $\alpha$ is locally Lipschitz and locally bounded, see Lemma \ref{il:7}; (iii) $B_{N,\delta}$ is locally Lipschitz and satisfies the linear growth condition, see~\cite[Lemma 4.5]{Cornalba2018aTR}; (iv) the noise $W_{\ep}$ is a $\mathcal{W}$-valued $Q$-Wiener process whose covariance operator $Q_{\sqrt{2}\ep}$ has rapidly decaying eigenvalues, see~\cite[Subsection 4.2]{Cornalba2018aTR}.
\end{proof}
Now let $X_{\ep}$ be the unique local mild solution to~\eqref{i:301}. For some positive constants $T,\delta$, and $k$, we define two relevant stopping times associated with~\eqref{i:301}, namely
\begin{align}\label{i:6001}
\tau_k & := \inf\left\{t>0:\|X_{\ep}(t)\|_{\mathcal{W}}\geq k\right\}\wedge T,\qquad
\mu_{\delta} := \tau_k\wedge \inf\left\{t>0:\min_{x\in\mathbb{T}}{\tilde{\rho}_{\ep}(x,t)}\leq \delta\right\}.
\end{align}
\begin{lemma}\label{il:8}
Fix $k>0$, $\delta>0$, and $T>0$. Let $X_{\ep}$ be the unique local mild solution to~\eqref{i:301}. The following statements hold:
\begin{description}
\item (a) The total mass of the system is conserved up to $\tau_k$, i.e., $\int_{\mathbb{T}}{\tilde{\rho}_{\ep}(x,s)\emph{\m} x}=\int_{\mathbb{T}}{\rho_{0}(x,s)\emph{\m} x}$ for all $s\leq \tau_k$.
\item (b) There exists a constant $C=C(X_0,W)$ such that, for all $x\in\mathbb{T}$ and for all $s\leq \mu_{\delta}$
\begin{align}\label{i:309}
-C\leq W'\ast\tilde{\rho}_{\ep}(x,s)\leq C,\quad -C\leq W''\ast\tilde{\rho}_{\ep}(x,s)\leq C.
\end{align}
\end{description}
\end{lemma}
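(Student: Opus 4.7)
The plan is to deduce (a) from the mild formulation by testing against the constant function $1$, and then to prove (b) by combining the resulting mass conservation with the pointwise lower bound $\tilde{\rho}_{\ep}\geq\delta$ available up to $\mu_\delta$.

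For (a), I would introduce the bounded linear functional $\phi\colon\mathcal{W}\to\mathbb{R}$ defined by $\phi(u,v):=\int_{\mathbb{T}}u(x)\,\m x$, and exploit the structural fact that the $C_0$-semigroup $\{S(t)\}_{t\geq 0}$ generated by $A$ preserves $\phi$. Indeed, writing $(u_t,v_t)=S(t)(u_0,v_0)$, the first coordinate of the associated linear system reads $\partial_t u_t=-\partial_x v_t$, whose spatial integral on $\mathbb{T}$ vanishes by periodicity, so $\phi(S(t)(u_0,v_0))=\phi(u_0,v_0)$. Applying $\phi$ to the mild identity
\begin{align*}
X_{\ep}(t)=S(t)X_0+\int_{0}^{t}S(t-s)\alpha(X_{\ep}(s))\,\m s+\int_{0}^{t}S(t-s)B_{N,\delta}(X_{\ep}(s))\,\m W_{\ep}(s),
\end{align*}
and commuting $\phi$ with the Bochner and stochastic integrals, I use that $\alpha(X_{\ep}(s))$ and $B_{N,\delta}(X_{\ep}(s))(\cdot)$ both have zero first component, so their contributions vanish under $\phi\circ S(t-s)$. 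This gives $\int_{\mathbb{T}}\tilde{\rho}_{\ep}(x,t)\,\m x=\phi(S(t)X_0)=\int_{\mathbb{T}}\rho_0(x)\,\m x$ for all $t\leq\tau_k$.

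For (b), since $\mu_\delta\leq\tau_k$, statement (a) is available on $[0,\mu_\delta]$. By the very definition of $\mu_\delta$, $\tilde{\rho}_{\ep}(\cdot,s)\geq\delta>0$ on $\mathbb{T}\times[0,\mu_\delta]$, whence $\tilde{\rho}_{\ep}\geq 0$ and therefore $\|\tilde{\rho}_{\ep}(\cdot,s)\|_{L^1}=\int_{\mathbb{T}}\tilde{\rho}_{\ep}=\int_{\mathbb{T}}\rho_0\leq\|\rho_0\|_{L^1}$. Young's inequality for convolutions on $\mathbb{T}$ then yields
\begin{align*}
|W'\ast\tilde{\rho}_{\ep}(x,s)|\leq\|W'\|_{L^\infty(\mathbb{T})}\|\tilde{\rho}_{\ep}(\cdot,s)\|_{L^1(\mathbb{T})}\leq\|W'\|_{L^\infty}\|\rho_0\|_{L^1},
\end{align*}
together with the identical bound with $W''$ in place of $W'$, so (b) follows with $C:=\max\{\|W'\|_{L^\infty},\|W''\|_{L^\infty}\}\|\rho_0\|_{L^1}$, a quantity that depends only on $X_0$ and $W$, as required.

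The main technical subtlety is the commutation of $\phi$ with the stochastic integral, which relies on $\phi$ being a bounded linear functional on $\mathcal{W}$ and on the integrability of the integrand ensured by the $\tau_k$-localisation; this is precisely the role played by $\tau_k$ in the statement of (a). The rest of the argument is essentially algebraic, exploiting the product structure of $A$, $\alpha$, and $B_{N,\delta}$ along the first coordinate.
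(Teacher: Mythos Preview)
Your proof is correct and follows essentially the same strategy as the paper for both parts. For (b), the arguments are identical: positivity of $\tilde{\rho}_\ep$ up to $\mu_\delta$, mass conservation from (a), and boundedness of $W'$ and $W''$.

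For (a), both you and the paper test against the constant element $(1,0)$, but the technical implementations differ. The paper first truncates $\alpha$ by a retraction map $R_k$ to obtain a globally Lipschitz drift $\hat{\alpha}:=\alpha\circ R_k$ and hence a global mild solution $\hat{X}_\ep$, then invokes a result of Frieler--Knoche to pass to a weak formulation against the specific test element $\zeta=(1,0)\in\mathcal{D}(A^{\star})$ with $A^{\star}\zeta=0$; uniqueness gives $\hat{X}_\ep=X_\ep$ on $[0,\tau_k]$. You instead stay with the mild formulation throughout, using the semigroup invariance $\phi\circ S(t)=\phi$ and the fact that $\alpha$ and $B_{N,\delta}$ have vanishing first component, so that $\phi(S(t-s)\alpha(\cdot))=0$ and similarly for the noise term. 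Your route is more direct and avoids the mild-to-weak conversion; the paper's route is somewhat more explicit about the integrability hypotheses required to justify commuting the functional with the stochastic integral, which is the reason for the retraction detour.
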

\begin{proof}
(a) We consider the $\mathcal{W}$-inner product of the mild formulation of~\eqref{i:301} with the constant element $\zeta:=(1,0)\in \mathcal{D}(A^{\star})$, the symbol $^{\star}$ denoting the adjoint. As $A^{\star}\zeta=0$, we trivially get that 
\begin{align*}
\int_{0}^{T}{\mean{\int_{0}^{t}{\left\|\langle S(t-s)B_{N,\delta}(X_{\ep}(s)),A^{\star}\zeta\rangle\right\|^2_{L_2(\mathcal{W},\mathbb{R})}\m s}}\m t}<\infty.
\end{align*}
We define $\hat{\alpha}:=\alpha\circ R_k$, where 
\begin{align*}
R_k\colon\mathcal{W}\mapsto \mathcal{W}\colon y\mapsto
\left\{
     \begin{array}{ll}
      y, & \mbox{ if } \|y\|_{\mathcal{W}} \leq k, \\
     k\frac{y}{\|y\|_{\mathcal{W}}}, & \mbox{ if } \|y\|_{\mathcal{W}}>k
     \end{array}
     \right.
\end{align*} 
is a standard retraction map. Since the map $\hat{\alpha}$ is Lipschitz continuous, we have a unique global mild solution $\hat{X}_{\ep}$ to~\eqref{i:301} with $\alpha$ replaced by $\hat{\alpha}$, which then clearly satisfies $\mathbb{P}(\int_{0}^{T}{\|\hat{X}_{\ep}(t)\|_{\mathcal{W}}\,\m t}<\infty)=1$. Since we have predictability of both the deterministic and stochastic integrands involved in the definition of mild solution (to~\eqref{i:301} with $\alpha$ replaced by $\hat{\alpha}$), we follow the proof of~\cite[Proposition 2.10, part (ii)]{Frieler2001}, but \emph{only} with the specific choice of $\zeta$ made above (and \emph{not} with any $\zeta\in\mathcal{D}(A^{\star})$). We deduce that $\hat{X}_{\ep}$ satisfies, $\mathbb{P}\mbox{-a.s.}$
\begin{align*}
\langle \hat{X}_\ep,\zeta \rangle = \langle X_0,\zeta \rangle + \int_{0}^{t}{\left[\langle \hat{X}_{\ep}(s),A^{\star}\zeta \rangle + \langle \hat{\alpha}(\hat{X}_{\ep}(s)),\zeta \rangle\right]\m s} + \int_{0}^{t}{\langle B_{N,\delta}(\hat{X}_{\ep}(s)),\zeta\rangle\m W_{\ep}(s)}=\langle X_{0},\zeta\rangle.
\end{align*}
Uniqueness of mild solutions implies that $\hat{X_{\ep}}(s)=X_{\ep}(s)$ for all $s\leq \tau_k$, and the claim is settled. Notice that we have \emph{not} proved that $X_{\ep}$ is a weak solution to~\eqref{i:301}.

(b) The potential $W$ being smooth, there exists $C$ such that $-C\leq W'(y-x)\leq C$ for all $x,y\in\mathbb{T}$. If $s\leq \mu_{\delta}$, then $\tilde{\rho}_{\ep}(y,s)>0$ for every $y\in\mathbb{T}$. We deduce that $-C\tilde{\rho}_{\ep}(y,s)\leq W'(x-y)\tilde{\rho}_{\ep}(y,s)\leq C\tilde{\rho}_{\ep}(y,s)$, for all $y\in\mathbb{T}$. Since $\mu_{\delta}\leq \tau_{k}$, we can rely on (a) and integrate in $y$, thus deducing that $-C(X_0,W)\leq W'\ast\tilde{\rho}_{\ep}(x,s)\leq C(W,X_0)$ for all $x\in\mathbb{T}$ and for all $s\leq \mu_{\delta}$. An identical argument applies with $W''$ replacing $W'$.
\end{proof}
We now turn to the proof of our main existence and uniqueness result for~\eqref{i:300}. This result is an adapted version of~\cite[Proposition 4.10 and Theorem 1.4]{Cornalba2018aTR}.
\begin{theorem}[High-probability existence and uniqueness result]\label{ithm:10}
Fix $\nu\in(0,1)$, and fix $0<\delta<c_1<c_2<k$. Let $X_0=(\rho_0,j_0)\in\mathcal{W}$ be a deterministic initial condition, such that $\min_{x\in\mathbb{T}}{\rho_0(x)}> c_1$ and $\|X_0\|_{\mathcal{W}}< c_2$, and let $T>0$ be as in the statement of Lemma \ref{il:7}. Assume the scaling $N\ep^{\theta}=1$, for $\theta$ large enough. It is possible to choose a sufficiently large number of particles $N$ such that there exists a unique $\mathcal{W}$-valued mild solution $X_{\ep}=(\tilde{\rho}_{\ep},\tilde{j}_{\ep})$ satisfying~\eqref{i:300}, up to time $T$, on a set $F_{\nu}\in\mathcal{F}$ such that $\mathbb{P}(F_{\nu})\geq 1-\nu$. 
\end{theorem}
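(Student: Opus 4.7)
The plan is a small-noise comparison argument: fix the noise-free solution $Z=(\rho_Z,j_Z)$ from Lemma \ref{il:7} which, by construction, stays inside the open region $\mathcal{R}:=\{(\rho,j)\in\mathcal{W}:\min_x\rho>c_1,\,\|(\rho,j)\|_{\mathcal{W}}<c_2\}$ on $[0,T]$. Then show that the mild solution $X_\ep$ of the regularised SPDE \eqref{i:301} stays close to $Z$ with high probability, which forces $X_\ep$ to remain in a slightly fattened version $\mathcal{R}'$ of $\mathcal{R}$, still compactly contained in $\{\min_x\tilde\rho_\ep>\delta,\,\|X_\ep\|_{\mathcal{W}}<k\}$. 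Inside $\mathcal{R}'$ the smoothing $h_\delta$ reduces to the true square root and, by Lemma \ref{il:8}, the interaction term $\{W'\ast\tilde\rho_\ep\}\tilde\rho_\ep$ is controlled, so the mild solution of \eqref{i:301} automatically solves the target model \eqref{i:300}.

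Concretely, consider the difference $Y_\ep(t):=X_\ep(t\wedge\mu_\delta)-Z(t\wedge\mu_\delta)$, which satisfies the mild identity
\begin{equation*}
Y_\ep(t)=\int_0^{t\wedge\mu_\delta}S(t-s)\bigl[\alpha(X_\ep(s))-\alpha(Z(s))\bigr]\m s+\int_0^{t\wedge\mu_\delta}S(t-s)B_{N,\delta}(X_\ep(s))\m W_\ep(s).
\end{equation*}
On $[0,\mu_\delta]$, both $X_\ep$ and $Z$ lie in the $\mathcal{W}$-ball of radius $k$, and Lemma \ref{il:8}(b) furnishes uniform bounds on $W'\ast\tilde\rho_\ep$ and $W''\ast\tilde\rho_\ep$. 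The local Lipschitz estimate \eqref{i:304} together with the contraction property of $S$ therefore yields
\begin{equation*}
\|\alpha(X_\ep(s))-\alpha(Z(s))\|_{\mathcal{W}}\leq C(k,W,X_0)\|Y_\ep(s)\|_{\mathcal{W}}\qquad\text{for all }s\leq\mu_\delta.
\end{equation*}
For the stochastic integral, I will apply the Burkholder--Davis--Gundy inequality, using the linear growth of $B_{N,\delta}$ (see \cite[Lemma 4.5]{Cornalba2018aTR}) and the rapid decay of the eigenvalues of $Q_{\sqrt{2}\ep}$, to obtain $\mathbb{E}\sup_{s\leq T\wedge\mu_\delta}\bigl\|\int_0^sS(s-u)B_{N,\delta}(X_\ep)\m W_\ep\bigr\|_{\mathcal{W}}^2\leq C(T,k,Q_{\sqrt{2}\ep})/N$. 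A standard Gronwall argument then gives
\begin{equation*}
\mathbb{E}\sup_{s\leq T\wedge\mu_\delta}\|Y_\ep(s)\|_{\mathcal{W}}^2\leq \frac{C(T,k,W,X_0,Q_{\sqrt{2}\ep})}{N}.
\end{equation*}

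Fix $\eta>0$ small enough that $c_1-\eta>\delta$ and $c_2+\eta<k$. Markov's inequality yields
$\mathbb{P}(\sup_{s\leq T\wedge\mu_\delta}\|Y_\ep(s)\|_{\mathcal{W}}>\eta)\leq C/(N\eta^2)$; choosing $N$ large enough that this is $<\nu$ defines the set $F_\nu:=\{\sup_{s\leq T\wedge\mu_\delta}\|Y_\ep\|_{\mathcal{W}}\leq\eta\}$. On $F_\nu$, the Sobolev embedding $H^1\hookrightarrow C^0$ combined with $\|Y_\ep\|_{\mathcal{W}}\leq\eta$ and \eqref{i:303} implies $\min_x\tilde\rho_\ep(\cdot,s)>\delta$ and $\|X_\ep(s)\|_{\mathcal{W}}<k$ for all $s\leq T\wedge\mu_\delta$. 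A continuity argument then shows $\mu_\delta=T$ on $F_\nu$, so throughout $[0,T]$ we have $h_\delta(|\tilde\rho_\ep|)=\sqrt{\tilde\rho_\ep}$, i.e.\ $X_\ep$ is a mild solution of \eqref{i:300}; uniqueness is inherited from the uniqueness of mild solutions to \eqref{i:301} on $F_\nu$.

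The main obstacle will be the nonlinear interaction term $\{W'\ast\tilde\rho_\ep\}\tilde\rho_\ep$: unlike in \cite{Cornalba2018aTR}, $\alpha$ is superlinear (only locally Lipschitz), so one cannot close the Gronwall loop without the double localisation by $\tau_k$ and $\mu_\delta$. This is why mass conservation (Lemma \ref{il:8}(a)) and the convolution bounds (Lemma \ref{il:8}(b)) are essential: they turn the a priori quadratic nonlinearity into an effectively Lipschitz perturbation on the stopped interval, and only then does the Gronwall--BDG chain deliver the $O(N^{-1})$ smallness that feeds the probability estimate.
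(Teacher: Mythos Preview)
Your proposal is correct and follows essentially the same route as the paper: compare $X_\ep$ to the noise-free solution $Z$ on the stopped interval $[0,\mu_\delta]$, use Lemma~\ref{il:8} to turn the superlinear interaction into an effectively Lipschitz term, close a Gronwall loop, and then argue by Chebyshev/Markov plus the embedding $H^1\hookrightarrow C^0$ that $\mu_\delta=\tau_k=T$ on a high-probability set, whence $h_\delta(|\tilde\rho_\ep|)=\sqrt{\tilde\rho_\ep}$ and $X_\ep$ solves~\eqref{i:300}. The only notable technical difference is that the paper controls the stochastic convolution via \cite[Proposition~7.3]{Da-Prato2014a} with exponent $q>2$ (yielding the factor $M^q(\ep,N)\to 0$ under the scaling), whereas you invoke a BDG-type bound with $q=2$; be aware that a plain Burkholder--Davis--Gundy inequality does not directly apply to $\sup_s\|\int_0^s S(s-u)\Phi(u)\,\m W\|$ because the integrand depends on $s$, so you either need a maximal inequality for stochastic convolutions under contraction semigroups or, as in the paper, the factorisation method with $q>2$.
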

\begin{proof}
Consider the time $t\wedge \mu_{\delta}$, for $t\in[0,T]$, with $\mu_{\delta}$ defined in~\eqref{i:6001}. Let $X_{\ep}$ and $Z$ be the local mild solutions to~\eqref{i:301} and~\eqref{i:302}, respectively. We subtract the mild solution expressions for $X_{\ep}(t\wedge\mu_{\delta})$ and $Z(t\wedge\mu_{\delta})$, thus obtaining
\begin{align}\label{i:305}
X_{\ep}(t\wedge\mu_{\delta})-Z(t\wedge\mu_{\delta}) & =\int_{0}^{t\wedge\mu_{\delta}}{S(t\wedge \mu_{\delta}-s)\left[\alpha(X_{\ep}(s))-\alpha(Z(s))\right]\m s}\nonumber\\
& \quad +\int_{0}^{t\wedge\mu_{\delta}}{S(t\wedge \mu_{\delta}-s)B_{N,\delta}(X_{\ep}(s))\m W_{\ep}}.
\end{align}
We look for a small-noise regime estimate up to time $t\wedge\mu_{\delta}$. In order to do so, we first prove that 
\begin{align}\label{i:6002}
\|\alpha(X_{\ep}(s))-\alpha(Z(s))\|^2_{\mathcal{W}}\leq K^2_1\left(W,\|\rho_0\|_{H^1},T\right)\|X_{\ep}(s)-Z(s)\|^2_{\mathcal{W}},\qquad \mbox{for all }s\leq\mu_{\delta}.
\end{align} 
We reuse computation~\eqref{i:304} and deduce
\begin{align}\label{i:6000}
& \|\alpha(X_{\ep}(s))-\alpha(Z(s))\|^2_{\mathcal{W}} \nonumber\\
& \quad \leq 2\left\{\left\|\{W'\ast (\rho_{Z}-\tilde{\rho}_{\ep})\}\rho_{Z}\right\|_{L^2}^2+\left\|\{W'\ast \tilde{\rho}_{\ep}\}(\rho_{Z}-\tilde{\rho}_{\ep})\right\|_{L^2}^2 + \left\|\{W''\ast (\rho_{Z}-\tilde{\rho}_{\ep})\}\rho_{Z}\right\|_{L^2}^2 \right.\nonumber\\
& \quad \quad + \left.\left\|\{W''\ast \tilde{\rho}_{\ep}\}(\rho_{Z}-\tilde{\rho}_{\ep})\right\|_{L^2}^2+ \left\|\{W'\ast (\rho_{Z}-\tilde{\rho}_{\ep})\}\rho'_{Z}\right\|_{L^2}^2+\left\|\{W'\ast \tilde{\rho}_{\ep}\}(\rho'_{Z}-\tilde{\rho}'_{\ep})\right\|_{L^2}^2 \right\}\nonumber\\
& \quad =: T_1+\dots+T_6. 
\end{align}
For $s\leq \mu_{\delta}$, we bound the terms $T_2,T_4,T_6$ using Lemma \ref{il:8}, and we bound the terms $T_1,T_3,T_5$ using the Sobolev embedding $H^1\subset C^0$ and Lemma \ref{il:7}. Estimate~\eqref{i:6002} is proved.

We are now in the position to provide the small-noise regime estimate for~\eqref{i:305}. We closely follow the proof of~\cite[Proposition 4.10]{Cornalba2018aTR}. Let $q>2$. We use~\cite[Proposition 7.3]{Da-Prato2014a} to deduce that, for some $K_2=K_2\left(W,\|\rho_0\|_{H^1},T,q\right)$ and some $K_3=K_3(\sigma,\delta,T,q,k)$
\begin{align}
  & \mean{\sup_{s\in[0,t]}{\left\|X_{\ep}(s\wedge\mu_{\delta})-Z(s\wedge\mu_{\delta})\right\|^q_{\mathcal{W}}}} \nonumber\\
  & \quad \leq K_2\,\mean{\int_{0}^{t}{\left\|X_{\ep}(u)-Z(u)\right\|^q_{\mathcal{W}}\mathbf{1}_{[0,\mu_{\delta}]}(u)\m u}}\nonumber\\
  & \quad\quad +\mean{\sup_{s\in[0,T]}{\left\|\int_{0}^{s}{S(s\wedge\mu_{\delta}-u)B_{N,\delta}(X_{\ep}(u))\mathbf{1}_{[0,\mu_{\delta}]}(u)\m W_{\ep}}\right\|^q}}\nonumber\\ 
  & \quad \leq K_2\!\int_{0}^{t}{\mean{\sup_{s\in[0,u]}\left\|X_{\ep}(s\wedge\mu_{\delta})-Z(s\wedge\mu_{\delta})\right\|^q_{\mathcal{W}}}\m u}\nonumber\\
  & \quad\quad + K(\sigma,\delta,T,q)M^q(\ep,N)\mean{\int_{0}^{T}{\!\!(1+\|X_{\ep}(u)\|^q_{\mathcal{W}})\mathbf{1}_{[0,\mu_{\delta}]}(u)\m u}}\nonumber\\
  & \quad \leq K_2\!\int_{0}^{t}{\mean{\sup_{s\in[0,u]}\left\|X_{\ep}(s\wedge\mu_{\delta})-Z(s\wedge\mu_{\delta})\right\|^q_{\mathcal{W}}}\m u}
    +K_3M^q(\ep,N),\label{i:306}
\end{align}
where $M^q(\ep,N)$ was derived in~\cite[Lemma 4.5]{Cornalba2018aTR}, and decays to 0 as $\ep\rightarrow 0$ for $\theta$ large enough. It is easy to deduce that 
\begin{align}\label{i:307}
\mean{\sup_{s\in[0,T]}{\left\|X_{\ep}(s\wedge\mu_{\delta})-Z(s\wedge\mu_{\delta})\right\|^q_{\mathcal{W}}}}\leq K_3M^q(\ep,N)e^{TK_2}.
\end{align}
For some small enough $\eta>0$, define
$$
S:=\left\{\omega\in\Omega:\sup_{s\in[0,T]}{\left\|X_{\ep}(s\wedge\mu_{\delta})-Z(s\wedge\mu_{\delta})\right\|^q_{\mathcal{W}}}\leq \eta\right\}.
$$ 
Using the Chebyschev inequality in~\eqref{i:307}, we deduce that there exists $N$ large enough so that $\mathbb{P}(S)\geq 1-\nu$. If $\eta$ is chosen small enough, for any $\omega\in S$, we have that $\mu_\delta=\tau_k=T$. If this was not the case, we would have one of the following contradictions: on one hand, if $\mu_{\delta}<\tau_k\leq T$, since $\min_{x\in\mathbb{T}}{\rho_Z(x,s)}> c_1>\delta$ for all $s\in[0,T]$ thanks to Lemma \ref{il:7}, and since $\eta$ is small enough, we can use the embedding $H^1\subset C^0$ to deduce that $\min_{x\in\mathbb{T}}{\tilde{\rho}_{\ep}(x,\mu_{\delta})}>\delta$, contradicting the definition of $\mu_{\delta}$; on the other hand, if $\mu_{\delta}=\tau_k< T$, since $\|\rho_Z(s)\|_{\mathcal{W}}< c_2<k$ for all $s\in[0,T]$ thanks to Lemma \ref{il:7}, and since $\eta$ is small enough, we can use the same embedding $H^1\subset C^0$ to deduce that $\|\tilde{\rho}_{\ep}(\tau_k)\|_{\mathcal{W}}<k$, contradicting the definition of $\tau_k$. This concludes the proof.
\end{proof}
 \begin{rem}
 The main difference between this section and~\cite[Section 4]{Cornalba2018aTR} is the combination of a solution localisation via stopping times 
 (needed because the interaction term $\{W'\ast\tilde{\rho}_{\ep}(\cdot,t)\}\tilde{\rho}_{\ep}(\cdot,t)$ is superlinear) and the conservation of mass, see Theorem \ref{ithm:10} and Lemma \ref{il:8}.
 \end{rem}
\begin{rem}
The existence theory described in this subsection can be slightly simplified, as one could deduce the validity of~\eqref{i:309} for all $x\in\mathbb{T}$ and all $s\leq \tau_k$ (rather than for all $s\leq \mu_{\delta}$). In this case, the bounding constants would depend on $k$ (hence on $\|\rho_0\|_{H^1}$) rather than on $\int_{\mathbb{T}}{\rho_0(x)\m x}$, simply because of the embedding $H^1\subset C^0$.
The proof of Theorem \ref{ithm:10} could then be adapted by using the stopping time $\tau_k$ instead of $\mu_{\delta}$ in the small-noise regime analysis leading up to~\eqref{i:307}, thus making the use of Lemma \ref{il:8} superfluous. \\
However, Lemma \ref{il:8} provides a lower constant $K_2$ for the benefit of ~\eqref{i:307}. The reason for this can be deduced from~\eqref{i:6000}. The bounds associated with $T_1,\dots,T_6$ are of the type
\begin{align*}
T_i\leq C^2_i\|X_{\ep}(s)-Z(s)\|^2_{\mathcal{W}},\qquad i\in\{1,\dots,6\},
\end{align*}
where the constants $C_i$, $i\in\{1,\dots,6\}$, depend on $\|\rho_Z\|_{H^1}$ (or equivalently, on $\|\rho_0\|_{H^1}$ and $T$). However, the terms $T_2,T_4$, and $T_6$ can be controlled more precisely, as $C_2$, $C_4$, and $C_6$ can be computed with the initial mass $\int_{\mathbb{T}}{\rho_0(x)\m x}$ only (Lemma \ref{il:8}). In the case of an initial datum satisfying $\int_{\mathbb{T}}{\rho_0(x)\m x}\ll \|\rho_{0}\|_{H^1}$, this corresponds to obtaining a constant $K^2_1$ in~\eqref{i:6002} which is approximately half the one we would get if we did not rely on Lemma \ref{il:8} to deal with $T_2,T_4$, and $T_6$; this is simply because $K^2_1=C^2_1+\dots+C^2_6$, and $C^2_2+C^2_4+C^2_6$ would, in this case, be negligible compared to $C^2_1+C^2_3+C^2_5$. This is turn implies that the constant $K_2$ in~\eqref{i:307} can be scaled down by a factor up to $2^{q/2}$. Overall, this gives a smaller right-hand-side in~\eqref{i:307}, which reflects into a lower number of particles needed to meet the requirements of Theorem \ref{ithm:10}.
\end{rem}

\paragraph{Acknowledgements} All authors thank the anonymous referees for their meticulous reading of the manuscript and their valuable suggestions. 
FC is supported by a scholarship from the EPSRC Centre for Doctoral Training in Statistical Applied
Mathematics at Bath (SAMBa), under the project EP/L015684/1. JZ gratefully acknowledges funding by a Royal Society Wolfson Research Merit Award and the Leverhulme Trust for its support via grant RPG-2013-261.

\def\cprime{$'$} \def\cprime{$'$} \def\cprime{$'$}
  \def\polhk#1{\setbox0=\hbox{#1}{\ooalign{\hidewidth
  \lower1.5ex\hbox{`}\hidewidth\crcr\unhbox0}}} \def\cprime{$'$}
  \def\cprime{$'$}

\end{document}